\documentclass[11pt]{article}
\usepackage{}
\usepackage[total={6in, 9in}]{geometry}
 \usepackage{amsfonts}
\usepackage{amsthm}
\usepackage{amssymb}
\usepackage{amsmath,amsfonts}
\usepackage{array}
\usepackage{diagbox}
\usepackage{cancel}
\usepackage{mathrsfs}
\usepackage{cases}
\usepackage{latexsym,bm}
\usepackage{indentfirst}
\usepackage{color}
\usepackage{ifpdf}
\usepackage{graphicx}
\usepackage{psfrag}
\usepackage{enumerate}
\usepackage{dsfont}
\usepackage[
pdfauthor={},
pdftitle={},
pdfstartview=XYZ,
bookmarks=true,
colorlinks=true,
linkcolor=blue,
urlcolor=blue,
citecolor=blue,
bookmarks=true,
linktocpage=true,
hyperindex=true
]{hyperref}

\usepackage[natural]{xcolor}
\usepackage{tikz}
\usepackage{subfigure,tikz}
\usepackage{tikz-3dplot}
\usetikzlibrary{decorations.pathreplacing}
\usepackage{pgf,tikz,pgfplots}
\usepackage{mathrsfs}

\newtheorem{THM}{\textbf{Theorem}}

\newtheorem{LEM}{\textbf{Lemma}}
\newtheorem{CLA}{\textbf{Claim}}[section]
\newtheorem{CON}[THM]{\textbf{Conjecture}}

\newcommand{\pf}{\noindent\textbf{Proof}.\quad}

\begin{document}
\title{Graceful Labeling of Two Families of Spiders}
\author{Songling Shan\footnote{Auburn University, Department of Mathematics and Statistics, Auburn, AL 36849.
		Email: {\tt szs0398@auburn.edu}.   
		Supported in part by NSF grant DMS-2451895.}
		\qquad 
		 Yucheng Zhong\footnote{Auburn University, Department of Mathematics and Statistics, Auburn, AL 36849.
		 	Email:	{\tt yzz0237@auburn.edu}. }
}

\date{\today}
\maketitle

\begin{abstract}

 A \emph{graceful labeling} of a graph  $G$ is an injective function $f : V(G) \to \{0, \ldots, |E(G)|\}$ such that $\{\,|f(u)-f(v)| : uv \in E(G)\,\} = \{1, \ldots, |E(G)|\}$. If such a labeling exists,  then we call $G$  \emph{graceful}. Introduced by Rosa  in 1967, graceful labeling has been widely studied, and the Graceful Tree Conjecture asserts that every tree is graceful. The conjecture  is known to hold for several classes of trees, including caterpillars, trees with at most four leaves, trees of diameter at most five, and certain spiders. An important subclass  is that of \emph{$\alpha$-labelings}, where a graceful labeling $f$ admits an integer $\alpha$ such that each edge joins a vertex with label at most $\alpha$ to one with label greater than $\alpha$. A result from 1982 by Huang, Kotzig, and Rosa shows that if $H$ has an $\alpha$-labeling with a vertex $u$ labeled $0$ or $\alpha$, and $G$ has a graceful labeling with a vertex  $v$ labeled $0$, then identifying $u$ and $v$  yields a graceful graph, though this requires a $0$-labeled vertex in $G$. We prove a related result that relaxes this condition: if $G$ has a graceful labeling $f$ such that  $f(u)+\lfloor n/2 \rfloor + 1 \le n$ and $n \not\equiv 1 \pmod{4}$, where $u\in V(G)$ and $n\ge 2$ is an integer, then joining $u$ to an end vertex of the vertex-disjoint $n$-vertex path  $P_n$ yields a graceful graph. As an application, we show that any spider with legs $L_1,\ldots,L_s$ ($s \ge 1$) satisfying $|E(L_{2})| \ge 2|E(L_1)|+ 4$ and $|E(L_{i+1})| \ge 2|E(L_i)|+ 2$ for $i \in \{2,\ldots, s-1\}$ is graceful. Furthermore,  we give an explicit graceful  labeling for spiders with one leg of arbitrary length and all others of length at most two such that the center is labeled by $0$.  This labeling enables  the construction of larger graceful spiders by attaching paths at the center.
\medskip 

\emph{\textbf{Keywords}:}    Graceful labeling; $\alpha$-labeling; Graceful Tree Conjecture.  

\end{abstract}

\section{Introduction}

For two integers $p$ and $q$, we let $[p,q]=\{i\in \mathbb{Z}: p \le i\le q\}$. 
 A  \emph{graceful labeling} of  a graph  $G$ is an injective function $f : V(G) \to [0,|E(G)|]$ such that $\{\,|f(u)-f(v)| : uv \in E(G)\,\} = [1,|E(G)|]$.  
If $G$ admits a graceful labeling, then $G$ is \emph{graceful}.  

Graceful labelings were first introduced by Rosa~\cite{Rosa1967} under the name of $\beta$-valuations, and the term “graceful labeling” was due to Golomb~\cite{Golomb72}. The major open problem on graceful labelings is the Graceful Tree Conjecture, proposed by Rosa in 1967~\cite{Rosa1967}. It is also known as the Kotzig–Ringel–Rosa Conjecture, reflecting its close connection to the Ringel Conjecture~\cite{Ringel1964} and Kotzig’s earlier work on related graph decomposition and labeling problems, including special cases of the conjecture.

\begin{CON}[Graceful Tree Conjecture]\label{con:GTC}
	Every tree is graceful. 
\end{CON}

 While Conjecture~\ref{con:GTC} has attracted considerable attention, it has been verified only for certain special classes of trees, including caterpillars~\cite{Rosa1967} (a caterpillar is a tree with the property that the removal of all leaves leaves a path), trees with at most four end-vertices~\cite{huang}, and trees with diameter at most five~\cite{HrncicarHaviar2001}. For additional classes of trees known to admit graceful labelings, we refer the reader to Gallian’s dynamic survey~\cite{GallianSurvey}. In this paper, our focus is on graceful spiders.

 A \emph{spider} is a tree with at most one vertex (called the  \emph{center}) of degree greater than two. Let $S$ be a spider such that the degree of the center is at least three (otherwise, the spider is simply a path).  Each  path in $S$ connecting the center and 
 a leaf  is called a \emph{leg} of $S$.     The general question whether all spiders are graceful is still open, but some classes of spiders have been shown to be graceful. 
Bahls, Lake, and Wertheim~\cite{Bahls} proved that spiders whose leg lengths differ by at most one are graceful. Panpa and Poomsa-ard~\cite{PanpaPoomsaard2016} showed that all spiders  with at most four legs of length greater than one admit a graceful labeling, and this result was extended by Panpa, Imnang, and Wasuanankul~\cite{PanpaImnangWasuanankul2025} to spiders with at most five such legs. For additional classes of spiders known to admit graceful labelings, we refer the reader to Gallian’s dynamic survey~\cite{GallianSurvey}.

A special type of graceful labeling, called an \emph{$\alpha$-labeling}, has been used as a constructive tool; see, for example, Panpa and Poomsa-ard~\cite{PanpaPoomsaard2016}. A graceful labeling $f$ is said to be an $\alpha$-labeling if there exists an integer $\alpha$ such that $f(u) \leq \alpha < f(v)$ for each edge $uv$ with $f(u) < f(v)$. The integer $\alpha$ is called the \emph{index} of $f$. Clearly, a graph must be bipartite in order to admit an $\alpha$-labeling. The following lemma of Huang, Kotzig, and Rosa~\cite{huang} provides a useful method for constructing larger graceful graphs from smaller ones.

\begin{LEM}[\cite{huang}]\label{lem:vertex amalgamation}
	Let $G$ and $H$ be vertex-disjoint graphs with $u \in V(G)$ and $v \in V(H)$.  If $G$ admits an $\alpha$-labeling with index $\alpha$ such that $u$ is labeled by $\alpha$ or $0$, and $H$ admits a graceful labeling such that $v$ is labeled by $0$, then the graph   obtained  from $G$ and $H$ by identifying $u$ and $v$ is graceful.
\end{LEM}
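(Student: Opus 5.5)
We prove the lemma by an explicit relabeling. Put $m=|E(G)|$, $k=|E(H)|$, and let the merged graph be $F$. Since the shared vertex is counted once, $F$ has $m+k$ edges. Let $g$ be the $\alpha$-labeling of $G$ with index $\alpha$, and let $h$ be the graceful labeling of $H$ with $h(v)=0$. First we reduce to the case $g(u)=0$. The complementary labeling $\bar g(x)=m-g(x)$ is again an $\alpha$-labeling, with index $m-\alpha-1$, and it sends the vertex labeled $\alpha$ to $m-\alpha$, the smallest label on the upper side. So we normalize the split instead: let $A$ be the side containing $u$, and assume the labels on $A$ form a set inside $[0,\alpha]$ with $u$ carrying the extreme label $0$ of that set. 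If instead $u$ carries $\alpha$, apply the reflection $x\mapsto \alpha-x$ on $A$ and $x\mapsto m+\alpha+1-x$ on $B$. This map preserves every difference $g(b)-g(a)$ with $a\in A$, $b\in B$, so it is again an $\alpha$-labeling with the same index, and now $u$ is labeled $0$.

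Next we define the labeling $\phi$ of $F$. The idea is to shift the upper side $B$ of $G$ up by $k$ and to place $H$ in the gap $[\alpha,\alpha+k]$. Concretely, set $\phi(a)=g(a)$ for $a\in A$ and $\phi(b)=g(b)+k$ for $b\in B$. This turns the $G$-edge differences $[1,m]$ into $[k+1,k+m]$. For $H$ we would like the labels $h(x)+\alpha$, which would put $v$ at $\alpha$. But $v=u$ is labeled $0$, not $\alpha$. The fix is to use the version of the reflection in which $u$ is labeled $\alpha$; the first step gives us both options. So we take $g(u)=\alpha$ with $A$ labeled in $[0,\alpha]$, and set $\phi(x)=h(x)+\alpha$ for $x\in V(H)$. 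Then $\phi(u)=\alpha$ agrees on both pieces, and the $H$-edge differences are exactly $[1,k]$.

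It remains to check injectivity. The labels on $A$ lie in $[0,\alpha]$. The labels on $H\setminus\{v\}$ lie in $[\alpha+1,\alpha+k]$. The labels on $B$ lie in $[\alpha+1+k,\,m+k]$. These ranges are disjoint, and each labeling is injective on its own piece, so $\phi$ is injective. All labels lie in $[0,m+k]$, and the edge differences are $[1,k]\cup[k+1,k+m]=[1,m+k]$. Hence $\phi$ is graceful.

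The main obstacle is verifying that the reflection preserves the $\alpha$-property and moves $u$ to the label $\alpha$ when it started at $0$. The check is that the map $x\mapsto\alpha-x$ sends $A$ into $[0,\alpha]$, the map $x\mapsto m+\alpha+1-x$ sends $B$ into $[\alpha+1,m]$, and each $G$-edge difference is unchanged. With that in place, the rest is routine interval bookkeeping.
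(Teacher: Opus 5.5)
Your proof is correct and is the paper's construction: move $u$ to label $\alpha$ via the reverse $\alpha$-labeling ($x\mapsto\alpha-x$ on the lower side, $x\mapsto m+\alpha+1-x$ on the upper side), then shift $H$ up by $\alpha$ and the upper side of $G$ up by $|E(H)|$. One small slip: that reflection does not leave each edge difference unchanged, since it sends $d$ to $m+1-d$; this still permutes $[1,m]$, so the reflected labeling is an $\alpha$-labeling with index $\alpha$ and the rest of your argument is unaffected.
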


The graceful labeling in Lemma~\ref{lem:vertex amalgamation} is obtained by shifting the vertex labels of \(H\) upward by \(\alpha\), and, when \(u\) is labeled \(\alpha\), shifting the labels in the upper part of the \(\alpha\)-labeling of \(G\) upward by \(|E(H)|\). If instead \(u\) is labeled \(0\), one first applies the reverse \(\alpha\)-labeling of \(G\), thereby obtaining another \(\alpha\)-labeling in which \(u\) receives the label \(\alpha\). The same construction then applies. In either case, the vertex formed by identifying \(u\) and \(v\) is assigned the label \(\alpha\).

For $n \geq 1$, let $P_n$ denote a path on $n$ vertices. Cattell~\cite{Cattell} proved that every path $P_n$ admits an $\alpha$-labeling in which a prescribed vertex can be assigned any label in $[0,n-1]$, provided that $n \not\equiv 1 \pmod{4}$. Consequently, if a spider admits a graceful labeling in which its center is labeled by $0$, then one can construct a larger graceful spider by attaching a suitable path at its center via Lemma~\ref{lem:vertex amalgamation}. However, after this operation, the center of the resulting spider is no longer labeled by $0$;
it is labeled by $\alpha$. 

Our first contribution addresses this issue by allowing iterative attachment of paths to the center of a graceful spider, provided the path length is relatively large. Let $G$ and $H$ be vertex-disjoint graphs with $u \in V(G)$ and $v \in V(H)$. We define $G(u)\oplus H(v)$ to be the graph obtained from the disjoint union of $G$ and $H$ by adding the edge $uv$. We now state our result.

\begin{THM}\label{thm:attching lemma}
	Let $G$ be a graph with $u \in V(G)$, and let $P_n$ be a path vertex-disjoint from $G$ with $v$ an endvertex of $P_n$. Suppose that $G$ admits a graceful labeling $f$ such that
	\[
	f(u) + \left\lfloor \frac{n}{2} \right\rfloor + 1 \leq n,
	\]
	and that $n \not\equiv 1 \pmod{4}$. Then $G(u)\oplus P_n(v)$ admits a graceful labeling $h$ such that
	\[
	h(w) = f(w) + \left\lfloor \frac{n}{2} \right\rfloor \quad \text{for all } w \in V(G).
	\]
\end{THM}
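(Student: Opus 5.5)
The plan is to keep the labels of $G$ in a contiguous middle block and to place the path's labels on both sides of it. Write $m=|E(G)|$ and $k=\lfloor n/2\rfloor$. The new graph $G(u)\oplus P_n(v)$ has $m+n$ edges. Setting $h(w)=f(w)+k$ for $w\in V(G)$ puts the labels of $G$ in $[k,m+k]$ and keeps the differences $[1,m]$ on $E(G)$. The $n$ path vertices must then receive exactly the $n$ remaining labels $[0,k-1]\cup[m+k+1,m+n]$. The $n$ new edges (the $n-1$ path edges together with $uv$) must realize the differences $[m+1,m+n]$.

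\textbf{Step 1: reduce to an $\alpha$-labeling of $P_n$.} Suppose $g$ is an $\alpha$-labeling of $P_n$ with index $k-1$, so its lower labels are $[0,k-1]$, its upper labels are $[k,n-1]$, and its differences are $[1,n-1]$. Define $h(x)=g(x)$ on lower vertices and $h(x)=g(x)+m+1$ on upper vertices. Then the lower labels are $[0,k-1]$, the upper labels are $[m+k+1,m+n]$, and every path edge has difference $|g(x)-g(y)|+m+1$, so the path edges give exactly $[m+2,m+n]$. The missing difference $m+1$ must come from $uv$, that is, $h(v)-h(u)=m+1$. This requires $v$ to be an upper vertex with $g(v)=f(u)+k$. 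The hypothesis $f(u)+k+1\le n$ gives exactly $f(u)+k\in[k,n-1]$, so the required value lies in the upper range. Injectivity is automatic because the three label blocks are disjoint. It therefore suffices to find an $\alpha$-labeling $g$ of $P_n$ with index $k-1$ in which the endvertex $v$ is upper and carries the prescribed label $c=f(u)+k$.

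\textbf{Step 2: invoke Cattell's result.} Since $n\not\equiv 1\pmod 4$, Cattell's theorem gives an $\alpha$-labeling $g$ of $P_n$ with $g(v)=c$. The lower set of $g$ is $[0,\alpha]$, and it is one of the two colour classes of the path.
\begin{itemize}
\item If $n$ is even, both classes have size $k$. Hence $\alpha=k-1$, and $c\ge k$ forces $v$ to be upper, so Step 1 applies directly.
\item If $n$ is odd, the endvertex $v$ lies in the larger class, which has size $k+1$. If $v$ is upper, then the lower class has size $k$, so $\alpha=k-1$ and we are done. If $v$ is lower, then $\alpha=k$ and $c\le k$. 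Combined with $c\ge k$, this gives $c=k$, which happens only when $f(u)=0$.
\end{itemize}

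\textbf{Main obstacle: the exceptional case.} What remains is $n\equiv 3\pmod 4$ with $f(u)=0$. Here we need an $\alpha$-labeling of $P_n$ with index $k-1$ in which the endvertex $v$ gets the smallest upper label $k$. Reversing Cattell's labeling via $x\mapsto n-1-x$ does not help, because it fixes the value $k=(n-1)/2$. I would handle this case by writing an explicit labeling. A natural candidate starts at $v$ and zigzags, with labels $k,\,k-1,\,k+1,\,k-2,\,k+2,\dots$; its differences are $1,2,3,\dots$ and it ends with $0$ and $n-1$. I would check that this sequence is an $\alpha$-labeling with lower set $[0,k-1]$ and $v$ upper, adjusting the ordering if needed. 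Alternatively, one could appeal to the structure of Cattell's construction, which presumably provides such endvertex labelings. Verifying this special case cleanly, so that the whole theorem does not rest on an unchecked property of Cattell's labeling, is the step I expect to require the most care.
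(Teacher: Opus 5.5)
Your proof is correct. Step 1 is exactly the paper's construction: shift $f$ by $\lfloor n/2\rfloor$, take an $\alpha$-labeling $g$ of $P_n$ with index $\lfloor n/2\rfloor-1$ and $g(v)=f(u)+\lfloor n/2\rfloor$, keep its lower labels, raise its upper labels by $m+1$, and let $uv$ supply the difference $m+1$.

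Where you differ is Step 2. The paper simply asserts that Lemma~\ref{lem:alpha-labeling-Pn}(c) yields such a $g$ with index $\lfloor n/2\rfloor-1$. You correctly observe that this is automatic when $n$ is even. For odd $n=2k+1$ it is automatic only when $g(v)\ge k+1$. When $f(u)=0$, an endvertex labelled $k$ could lie in the lower class. For example, with $n=3$, the labeling $1,2,0$ read from $v$ is an $\alpha$-labeling with $g(v)=1$ but index $1$, and feeding it into the construction repeats the difference $1$. Nothing in the cited lemma rules this out, so on this point your argument is more complete than the paper's.

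Your zigzag settles the exceptional case and needs no adjustment. List the path from $v$ as $p_0,\dots,p_{2k}$ and set $g(p_{2j})=k+j$ and $g(p_{2j-1})=k-j$. The even positions receive $[k,2k]$ and the odd positions receive $[0,k-1]$. The edges $p_{2j}p_{2j+1}$ and $p_{2j-1}p_{2j}$ have differences $2j+1$ and $2j$. So $g$ is an $\alpha$-labeling with index $k-1$ in which $v$ is upper with label $k$, and you can state this outright.

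Your reason for dismissing the reversal is wrong, however. The fact that $x\mapsto n-1-x=2k-x$ fixes the value $k$ is precisely why it works, because the map exchanges the two classes. It sends a lower set $[0,k]$ to $[k,2k]$ and the upper set $[k+1,2k]$ to $[0,k-1]$. Applied to a Cattell labeling in which $v$ is lower with $g(v)=k$, it gives an $\alpha$-labeling of index $k-1$ in which $v$ is upper with label $k$. That closes the exceptional case in one line, with no explicit construction needed.
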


As an application of Theorem~\ref{thm:attching lemma}, we obtain the following result.

 \begin{THM}\label{main1}
 	Let $S$ be a spider with legs $L_1,L_2,...L_s$,  and let $\ell_i=|E(L_i)|$ for each $i\in [1,s]$. 
 	Suppose that    $\ell_{i+1}\geq 2\ell_i+2$ for any $i\in [2,s-1]$, and $\ell_{2}\geq 2\ell_1+2$ if $\ell_2 \not\equiv 1 \pmod 4$
 	and $\ell_{2}\geq 2\ell_1+4$  otherwise. 
 	Then $S$ is graceful.
 \end{THM}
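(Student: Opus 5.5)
The plan is to build $S$ one leg at a time, starting from $L_1$ and attaching $L_2,\dots,L_s$ in order with Theorem~\ref{thm:attching lemma}. Throughout, $c$ denotes the center. At each step we keep track of a single number, the label of $c$, and we want the invariant that after $L_i$ has been attached, the current graceful labeling $h_i$ satisfies $h_i(c)\le \ell_i-1$ (or a bound of the same order).

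\emph{Base case and first attachment.} Take $G_1=L_1$, a path with $\ell_1$ edges and $c$ as one endvertex. Label it by the standard graceful labeling $0,\ell_1,1,\ell_1-1,\dots$, so that $f(c)=0$. Attaching $L_2$ means joining $c$ to an endvertex of a path $P_n$ with $n=\ell_2$ vertices.
\begin{itemize}
\item If $\ell_2\not\equiv 1 \pmod 4$, the condition of Theorem~\ref{thm:attching lemma} reads $0+\lfloor \ell_2/2\rfloor+1\le \ell_2$, which clearly holds. We obtain $h_2(c)=\lfloor \ell_2/2\rfloor\le \ell_2-1$.
\item If $\ell_2\equiv 1 \pmod 4$, let $w$ be the neighbour of $c$ on $L_2$. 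Start instead from the path $G_1'=L_1+cw$, which has $\ell_1+1$ edges, and label it gracefully with $w$ an endvertex receiving $0$. Then $c$ receives $\ell_1+1$. Now attach $P_{\ell_2-1}$ at $u=w$; here $\ell_2-1\equiv 0 \pmod 4$. The condition becomes $\ell_1+1+\lfloor(\ell_2-1)/2\rfloor+1\le \ell_2-1$, which is guaranteed by $\ell_2\ge 2\ell_1+4$. This explains the extra $+2$ in the hypothesis. The new center label is $\ell_1+1+(\ell_2-1)/2\le \ell_2-1$.
\end{itemize}

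\emph{Inductive step.} Suppose $i\ge 2$ and $h_i(c)\le \ell_i-1$. To attach $L_{i+1}$ we apply Theorem~\ref{thm:attching lemma} with $G$ the current spider, $u=c$ and $n=\ell_{i+1}$. The needed inequality $h_i(c)+\lfloor \ell_{i+1}/2\rfloor+1\le \ell_{i+1}$ follows from $h_i(c)\le \ell_i-1\le \lceil \ell_{i+1}/2\rceil-1$, using $\ell_{i+1}\ge 2\ell_i+2$. The new center label is
\[
h_{i+1}(c)=h_i(c)+\lfloor \ell_{i+1}/2\rfloor\le \lceil \ell_{i+1}/2\rceil-1+\lfloor \ell_{i+1}/2\rfloor=\ell_{i+1}-1,
\]
so the invariant propagates. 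After $s-1$ steps the whole of $S$ is gracefully labeled.

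\emph{Main obstacle.} The difficulty is a later leg with $\ell_{i+1}\equiv 1\pmod 4$ for $i\ge 2$. Here Theorem~\ref{thm:attching lemma} does not apply directly, and the $w$-trick from the base case is unavailable because the center is no longer labeled $0$. My first attempt is to attach only $P_{\ell_{i+1}-1}$ at $c$, which is allowed since $\ell_{i+1}-1\equiv 0\pmod 4$ and the slack from $\ell_{i+1}\ge 2\ell_i+2$ still suffices. Then I would add the final leaf of the leg as a pendant vertex labeled $|E|+1$, where $|E|$ is the edge count before adding the leaf. This is legitimate if the far endvertex of the attached path is labeled $0$. That seems plausible, because $h$ shifts every label of $G$ up by $\lfloor n/2\rfloor$, so the small labels $0,\dots,\lfloor n/2\rfloor-1$ all sit on the new path. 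Whether an endvertex gets $0$ depends on the explicit path labeling in the proof of Theorem~\ref{thm:attching lemma}. If it does not, I would adjust the invariant and redo the inequality check with center label $h_i(c)$ and $n=\ell_{i+1}-1$, for which the bound only loses $1$.
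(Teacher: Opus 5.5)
Your treatment of $L_2$ and of every later leg with $\ell_{i+1}\not\equiv 1\pmod 4$ is fine. However, the case $i\ge 2$, $\ell_{i+1}\equiv 1\pmod 4$ is left open, and the fix you propose provably fails. Suppose you attach $P_n$, with $n=\ell_{i+1}-1\equiv 0\pmod 4$, at $c$ via Theorem~\ref{thm:attching lemma}. Then the path carries an $\alpha$-labeling $g$ with $g(v)=h_i(c)+n/2$. For even $n$, three facts hold: the ends $v,v'$ lie in opposite classes, these classes use exactly the labels $[n/2,n-1]$ and $[0,n/2-1]$, and all interior vertices have degree $2$. Summing the edge labels therefore gives
\[
\frac{n(n-1)}{2}=\sum_{xy\in E(P_n)}|g(x)-g(y)|=\Bigl(2\sum_{k=n/2}^{n-1}k-g(v)\Bigr)-\Bigl(2\sum_{k=0}^{n/2-1}k-g(v')\Bigr)=\frac{n^2}{2}-g(v)+g(v'),
\]
that is, $g(v)-g(v')=n/2$. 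Hence $g(v')=h_i(c)\le n/2-1$, and the far end always receives $h(v')=h_i(c)$, whichever $\alpha$-labeling is used. In your construction $h_i(c)\ge 1$ for every $i\ge 2$, and it is not the top label either. So the last vertex of the leg can never be hung on $v'$ as a pendant vertex with a new extreme label. Your fallback (``adjust the invariant and redo the inequality check'') does not say how that vertex gets added at all.

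The missing idea is to perform your $w$-trick for \emph{all} such legs at the very start, while $c$ is still labeled $0$. Your own remark that the trick becomes unavailable later is exactly the reason to front-load it. This is what the paper does. Let $k_1<\dots<k_r$ be the indices $k\ge 2$ with $\ell_k\equiv 1\pmod 4$. The neighbor $y_{k_j}$ of $c$ on $L_{k_j}$ is added to $L_1$ as a pendant vertex of $c$ with label $\ell_1+j$. Later, $L_{k_j}$ is completed by attaching the path $L_{k_j}-\{c,y_{k_j}\}$, which has $\ell_{k_j}-1\equiv 0\pmod 4$ vertices, at $y_{k_j}$ instead of at $c$. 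Since Theorem~\ref{thm:attching lemma} shifts all old labels uniformly, $y_{k_p}$ arrives at its turn $i=k_p$ with label $\ell_1+p$ plus the total shift so far. An invariant of the form $h_i(c)\le \ell_i-1$ is too weak to verify the hypothesis there; it falls short by about $\ell_1+p$. The paper handles this with sharper bookkeeping. It bounds the total shift by $\sum_{j=2}^{i-1}\ell_j/2$ and telescopes $\ell_{j+1}\ge 2\ell_j+2$ into $\ell_j\le 2^{-(i-j)}\ell_i-2(1-2^{-(i-j)})$. It then uses the extra $+2$ for $\ell_2$ to absorb the offset $\ell_1+p$. The result is that this label plus $\lfloor(\ell_i-1)/2\rfloor+1$ is at most $\ell_i-(i-p)+\tfrac12\le \ell_i-\tfrac12$. (A side note on your base case: the inequality you wrote is not the hypothesis of Theorem~\ref{thm:attching lemma}, which concerns $f(w)=0$ and holds trivially. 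It is really a bound on the new label of $c$.)
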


Saengsura and Poomsa-Ard~\cite{saengsura} proved a  result  similar as  Theorem~\ref{main1} where  they required $\ell_{i+1}\geq 2\ell_i+2$ but $\ell_i$ is even for all $i\in [3,s]$.

Furthermore, we give an explicit graceful  labeling for spiders with one leg of arbitrary length and all others of length at most two such that the center is labeled by $0$.

\begin{THM}\label{thm:graceful-construction}
Let $S$ be a spider with one leg of arbitrary length and all others of length at most two. 
Then $S$ has a graceful labeling such that its center is labeled by zero. 
\end{THM}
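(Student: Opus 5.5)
Let the spider $S$ have center $c$, one long leg of length $m$, $a$ legs of length one and $b$ legs of length two. Then $q=|E(S)|=m+a+2b$. I would prove Theorem~\ref{thm:graceful-construction} by writing down an explicit labeling with $f(c)=0$ and checking it. Nothing from Theorem~\ref{thm:attching lemma} or Lemma~\ref{lem:vertex amalgamation} is needed here.

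The key observation is that, because $f(c)=0$, every edge at the center contributes its neighbor's label as a difference. The main tool is the zigzag labeling of a path. If a path's vertices fill an interval $[p,p+r]$ in the order $p+r,\,p,\,p+r-1,\,p+1,\dots$, its edge differences are exactly $r,r-1,\dots,1$, each used once.

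The first step is the "short long leg" regime, roughly $m\le a+b$ (the exact threshold to be fixed by the computation). Here the long leg is labeled $0,\,m,\,1,\,m-1,\,2,\dots$, a zigzag on $[1,m]$ entered from its top. This uses the labels $[1,m]$ and the differences $[1,m]$. The short legs must then use the labels $[m+1,q]$ and realize the differences $[m+1,q]$.
\begin{itemize}
\item Each leg of length one simply takes one label $d$, which realizes the difference $d$.
\item Each leg $c\,x_j\,y_j$ of length two realizes the differences $x_j$ and $|x_j-y_j|$, where both labels lie in $[m+1,q]$.
\end{itemize}
I would take $x_j=q-j+1$ for $j=1,\dots,b$, so the differences $x_j$ are the top $b$ values. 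I would then choose the $y_j$ so that the values $x_j-y_j$ fill a block of consecutive differences just above $m$. The leftover labels, which equal the leftover differences, go to the legs of length one. I would split into subcases on the parity of $b$ and on whether there are enough length-one legs to absorb the leftovers.

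The second step handles large $m$. Now a short-leg difference $|x_j-y_j|\ge m+1$ cannot be achieved inside $[m+1,q]$, so the roles must interleave. My first attempt is to have the long leg consume the large differences instead. It would start $0,\,x,\,\dots$ and then zigzag on the extreme labels $q,1,q-1,2,\dots$, or their reflection, producing the differences $q,q-1,\dots$ down to some point. The short legs would then be placed with one endpoint near the top and one near the bottom of the remaining middle window. The tail of the long leg is a zigzag on a middle interval; it covers the remaining small differences and ends at the leaf.

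Concretely, I would first label the $a+2b$ short-leg vertices using the differences in a middle band. Then I would cut the long leg into three pieces: a first vertex, an outer zigzag, and an inner zigzag. The cut points are chosen so that the label sets are disjoint and the difference sets partition $[1,q]$. Again I expect a small case split according to $m$ modulo $2$ (and possibly modulo $4$), since where a zigzag ends depends on parity.

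The main obstacle is the large-$m$ case, specifically the junction of the long leg. The single difference from $0$ to the first vertex of the long leg must be a value that no zigzag produces. The transition between the outer and inner zigzags must also create exactly one missing difference and no repeated label. I would resolve this by choosing the first long-leg label to be the unique label left uncovered by the short legs, and then adjusting the zigzag start by one position in the odd case. After that, a routine check that the labels are distinct and that the differences are exactly $[1,q]$ finishes the proof.
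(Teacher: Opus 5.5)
\textbf{There is a genuine gap: the proposal never gives a labeling, and the schemes it does describe fail.}

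Your proposal is a plan rather than a proof. The whole content of this theorem is an explicit labeling together with its verification, and every decisive choice is left open: the $y_j$, the threshold between regimes, the cut points on the long leg, and the ``one-position adjustment.''

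\textbf{First regime.} The scheme you commit to here is over-determined.
\begin{itemize}
\item The long leg uses exactly the labels and the differences $[1,m]$. So the short legs use exactly the labels $[m+1,q]$ and must realize exactly the differences $[m+1,q]$.
\item Each first vertex $x_j$ and each length-one leaf realizes its own label as a difference. This forces $\{|x_j-y_j|\}=\{y_j\}$ as sets.
\item With $\{x_j\}=[q-b+1,q]$ and $\{x_j-y_j\}=[m+1,m+b]$, you therefore need $\{y_j\}=[m+1,m+b]$.
\item Comparing $\sum_j x_j-\sum_j y_j=b(a+b)$ with $\sum_{d=m+1}^{m+b}d$ gives $2a+b=2m+1$, where $a$ counts the length-one legs used inside the scheme.
\end{itemize}
This identity can never hold for even $b$, so the even-$b$ branch of your parity split has no construction at all. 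For example, $a=0$, $b=4$, $m=2$ lies in your first regime, but $\sum_j x_j-\sum_j y_j=34-18=16\neq 18$. Length-one legs are in fact irrelevant: once the center has label $0$, they can be appended with labels $|E|+1,|E|+2,\dots$. That is how the paper removes them, so the real problem is the case $a=0$.

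\textbf{Second regime.} This is where the difficulty lies, and it is not resolved either. The difference $q$ can only occur on the edge joining the center to the vertex labeled $q$, so that vertex must be a neighbor of the center. As written, your long leg begins $0,x$ with $x$ a leftover middle label, and only then runs through the extreme zigzag (in either orientation). So the difference $q$ is never realized. Note also that $q,1,q-1,\dots$ produces $q-1,q-2,\dots$, not $q,q-1,\dots$.

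If you instead start $0,q,1,q-1,\dots$, everything nontrivial is still unconstructed. You still have to show, for all $m$ and $b$, how the remaining differences are partitioned among four sources without collisions:
\begin{itemize}
\item the short-leg differences $x_j$, which equal their own labels;
\item the short-leg differences $|x_j-y_j|$;
\item the inner zigzag;
\item the single junction edge.
\end{itemize}

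\textbf{What the paper does instead.} It avoids separate bands altogether. It labels the length-two legs $q-(2i-1)$ and $2i-1$ for odd $m$ (and $q-2(i-1)$ and $2i-1$ for even $m$). It labels the long-leg vertex $x_i$ by one of four arithmetic progressions according to $i \bmod 4$; for odd $m$ these are $i/2$, $q-\frac{i-1}{2}$, $2b-1+\frac{i+2}{2}$, and $q-(2b-1)-\frac{i+1}{2}$. Thus the long leg weaves between the short legs' labels along its whole length. Distinctness of labels and of edge differences is then checked class by class using parity. Some such explicit interleaved formula, with its verification, is what your argument is missing.
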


As an application of Lemma~\ref{lem:vertex amalgamation} and Theorem~\ref{thm:graceful-construction}, we have the following result.

\begin{THM}\label{main3}
	If $S$ is a spider with all but three legs of length at most $2$, then  $S$ is graceful.
\end{THM}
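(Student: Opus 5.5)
The plan is to split $S$ at its center $c$ into two pieces and glue them back with Lemma~\ref{lem:vertex amalgamation}. One piece is a spider covered by Theorem~\ref{thm:graceful-construction}. The other is a path made of two long legs, passing through $c$.

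\textbf{Setup.} Let $A,B,C$ be the three exceptional legs, with lengths $a,b,c$, and let all other legs have length at most $2$. If one of $a,b,c$ is at most $2$, then $S$ has at most two legs of length greater than $2$. I would then run the argument below with one fewer long leg, or invoke Theorem~\ref{thm:graceful-construction} directly when at most one leg is long. So assume all three are long.

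\textbf{Main step.} Choose two of the three legs, say $B$ and $C$, and let $A$ be the remaining one.
\begin{enumerate}
\item Let $S'$ be the spider consisting of $c$, the leg $A$ and all short legs. By Theorem~\ref{thm:graceful-construction}, $S'$ has a graceful labeling in which $c$ receives label $0$. This is the graph $H$ of Lemma~\ref{lem:vertex amalgamation}, with $v=c$.
\item Let $P=B\cup C$. This is a path on $n=b+c+1$ vertices, with $c$ an interior vertex.
\item If $n\not\equiv 1\pmod 4$, Cattell's theorem gives an $\alpha$-labeling of $P$ in which the prescribed vertex $c$ receives a prescribed label. I take that label to be $0$, or equivalently the index $\alpha$.
\item Apply Lemma~\ref{lem:vertex amalgamation} with $G=P$, $u=c$, and $H=S'$, $v=c$. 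Identifying the two copies of $c$ yields exactly $S$, which is therefore graceful.
\end{enumerate}

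\textbf{Choosing the pair.} We need a pair among $\{a,b,c\}$ whose sum is not $\equiv 0\pmod 4$. If every pairwise sum is $\equiv 0 \pmod 4$, then $2a\equiv 2b\equiv 2c\equiv 0$ and $a\equiv b\equiv c \pmod 4$. So the only bad case is $a\equiv b\equiv c\equiv 0\pmod 4$ or $a\equiv b\equiv c\equiv 2\pmod 4$. In every other case some pair works, and the argument above finishes.

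\textbf{Bad case (main obstacle).} Here $a,b,c$ are all even and congruent mod $4$. My first attempt would be to check whether a path of odd order $n\equiv 1\pmod 4$ still has an $\alpha$-labeling with a suitably placed interior vertex labeled $0$ or $\alpha$. The vertex $c$ sits at the even distances $b$ and $c$ from the two ends, which may be exactly the positions Cattell's exclusion does not forbid. I would verify this by writing down the standard zigzag labeling of $P_n$ and then modifying it by a reflection about $c$.

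If that fails, the fallback is different. Each single leg has $b$ edges with $b\equiv 0,2\pmod 4$, so it is a path $P_b$ with $b\not\equiv 1\pmod 4$ vertices. I would first give $S'\cup A$-type pieces a labeling with $c$ at $0$ and then attach one leg via Theorem~\ref{thm:attching lemma}, since $0+\lfloor b/2\rfloor+1\le b$. The difficulty is that this moves the center label away from $0$, so the last leg would then need a separate, direct adjustment. I expect this residue case to be where essentially all the real work lies.
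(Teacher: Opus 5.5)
\paragraph{Verdict.} Your proof has a genuine gap: it leaves the case you call the bad case unproved. Your decomposition matches the paper's. You remove two long legs $B,C$, label the remaining spider by Theorem~\ref{thm:graceful-construction} with the center at $0$, give the path $B\cup C$ an $\alpha$-labeling with the center at $0$, and glue with Lemma~\ref{lem:vertex amalgamation}.

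\paragraph{The gap.} Three long legs whose lengths are all $\equiv 0$ or all $\equiv 2 \pmod 4$ (e.g.\ $4,4,4$ or $6,6,10$) are never handled. The zigzag-with-reflection idea is only a plan. The fallback through Theorem~\ref{thm:attching lemma} cannot work in general:
\begin{itemize}
\item After attaching $B$, the center carries label $\lfloor b/2\rfloor\neq 0$, so Lemma~\ref{lem:vertex amalgamation} no longer applies to the result.
\item A second use of Theorem~\ref{thm:attching lemma} would need $\lfloor b/2\rfloor+\lfloor c/2\rfloor+1\le c$, which fails, for instance, when $b=c$.
\end{itemize}

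\paragraph{The missing ingredient.} The residue obstruction comes from citing the wrong result. You only need the center to get label $0$ (or $\alpha$), not an arbitrary prescribed label. Lemma~\ref{lem:alpha-labeling-Pn}(b) (Rosa; Cattell, Corollary~1) says that for \emph{every} vertex $v$ of $P_n$ there is an $\alpha$-labeling with $v$ labeled $0$. The only exception is the central vertex of $P_5$, and there is no congruence condition on $n$. The restriction $n\not\equiv 1\pmod 4$ matters only for prescribing arbitrary labels; compare Lemma~\ref{lem:alpha-labeling-Pn}(c), where for $n=4s+1$ the excluded labels are $s$ and $3s$, never $0$.

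\paragraph{Why this closes it.} Each of $B$ and $C$ has length at least $3$, so $B\cup C$ has at least $7$ vertices and the $P_5$ exception never arises. Any two long legs can therefore be paired, the ``choosing the pair'' analysis and the bad case disappear, and your main step becomes exactly the paper's proof.
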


The remainder of this paper is organized as follows. In the next section, 
we prove Theorems~\ref{thm:attching lemma} and~\ref{main1}.  The proofs  of
Theorems~\ref{thm:graceful-construction} and~\ref{main3} are given in the last section.

\section{Proof of Theorems~\ref{thm:attching lemma} and~\ref{main1}}

Let $G$ be a graph and $f:V(G) \to [0,|E(G)|]$.   For $uv\in E(G)$, define $f(uv) =|f(u)-f(v)|$. 
For $X\subseteq V(G)$ and $F\subseteq E(G)$, 
we let $f(X)=\{f(x): x\in X\}$ and $f(F) =\{f(e): e\in F\}$. 
For an integer $\alpha$ and a set $[p,q]$ with $p,q\in \mathbb{Z}$, we let $\alpha+[p,q]=\{\alpha+i: i\in [p,q]\}$. 
We will need the following result on graceful labelings of paths.

\begin{LEM}\label{lem:alpha-labeling-Pn}
Let   $P_n$ be a path and $v\in V(P_n)$. 
\begin{enumerate}[(a)]
	\item  Then $P_n$ has a graceful labeling such that $v$ is labelled by $0$.  (See Cattell~\cite[Theorem 3]{Cattell}.)
		\item Then  $P_n$ has   an $\alpha$-labeling $f$   such that $f(v) = 0$ except 
	$n=5$ and    $v$ is the central vertex. (See Rosa~\cite{rosa1977labeling} and Cattell~\cite[Corollary 1]{Cattell}.)  
	\item  If $v$ is an endvertex, then $P_n$ has an $\alpha$-labeling $f$  such that $f(v)=i$ for any $i\in [0,n-1]$ unless $n=4s+1$ for some $s\in \mathbb{N}$ and $i \in \{s, 3s\}$. (See Cattell~\cite[Corollary 1]{Cattell}.)

\end{enumerate}
\end{LEM}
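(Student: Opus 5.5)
Since this lemma collects known facts about paths, the plan is mainly to check that each part matches the cited statements exactly, and to recall the constructions behind them. Only the elementary symmetries will actually be used later.

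Write $P_n=v_1v_2\cdots v_n$ and let $m=n-1=|E(P_n)|$. The basic object is the zigzag labeling
\[
g(v_{2j-1})=j-1,\qquad g(v_{2j})=m-(j-1).
\]
It is an $\alpha$-labeling with index $\alpha=\lceil n/2\rceil-1$, edge labels $m,m-1,\ldots,1$ in order along the path, and $g(v_1)=0$.

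Two symmetries will be applied repeatedly.
\begin{enumerate}[(i)]
\item The complement $x\mapsto m-x$ preserves gracefulness, and it turns an $\alpha$-labeling of index $\alpha$ into one of index $m-\alpha-1$.
\item Inside an $\alpha$-labeling one may reflect each part separately: $x\mapsto \alpha-x$ on the lower part and $x\mapsto m+\alpha+1-x$ on the upper part. Every edge label $b-a$ becomes $(m+\alpha+1-b)-(\alpha-a)=m+1-(b-a)$, so the edge labels are permuted and the result is again an $\alpha$-labeling with the same index.
\end{enumerate}
Combining $g$ with these symmetries, and with reversing the path, already gives an endvertex any of the labels $0$, $\alpha$, $\alpha+1$ or $m$.

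For part (b) with $v=v_k$ an interior vertex, the approach is Rosa's and Cattell's. Split the path at $v_k$ into two subpaths and label each with a zigzag, interleaving them so that the lower labels form an interval starting at $0$ at $v_k$ and the edge labels of the two pieces alternate to fill $[1,m]$. The case analysis is on $k$ and $n$ modulo $2$ and modulo $4$. The one genuine obstruction is $n=5$ with $v$ the centre: a finite check shows that no $\alpha$-labeling of $P_5$ gives the centre label $0$.

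Part (a) is Cattell's Theorem~3. When the vertex is not covered by part (b), or $n=5$ with $v$ central, a non-$\alpha$ graceful labeling is required. For $P_5$ with $v$ central, $3,1,0,4,2$ works, with edge labels $2,1,4,2$... that fails, so I would take the labeling from Cattell instead.

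Part (c), giving an endvertex an arbitrary label $i$, is Cattell's Corollary~1. The idea is to start the zigzag at label $i$ and let it wrap around in blocks of four. A counting argument on the lower and upper parts, using the fact that the lower part has $\lceil n/2\rceil$ vertices containing $i$, shows exactly when the wrap-around closes up. The obstruction arises precisely when $n=4s+1$ and $i\in\{s,3s\}$, and the two excluded values are exchanged by the complement symmetry (i).

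The main obstacle is this last step: proving that the exceptional pairs in (c) are genuinely impossible, and that all other pairs are achievable, needs Cattell's full case analysis. In the paper I would therefore simply cite \cite{Cattell} and \cite{rosa1977labeling} for all three parts rather than reprove them.
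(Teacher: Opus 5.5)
Your final plan, citing \cite{Cattell} and \cite{rosa1977labeling} for all three parts, matches the paper: the lemma is quoted from the literature and not reproved there. Two parts of your surrounding discussion are wrong, though.

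First, your $P_5$ example fails, as you noticed mid-sentence: $3,1,0,4,2$ has edge labels $2,1,4,2$. A correct one is $1,4,0,2,3$, with edge labels $3,4,2,1$. It is necessarily not an $\alpha$-labeling, since the vertex labeled $2$ is adjacent to both $0$ and $3$. With this example, part (a) follows at once from part (b): every $\alpha$-labeling is graceful, and (b) misses only this one case. So Cattell's Theorem~3 is not needed separately.

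Second, your remark that only the elementary symmetries will be used later is mistaken. The proof of Theorem~\ref{thm:attching lemma} needs an $\alpha$-labeling of $P_n$ of index $\lfloor n/2\rfloor-1$ in which the endvertex $v$ receives $f(u)+\lfloor n/2\rfloor$. This can be any value in $[\lfloor n/2\rfloor,n-1]$. For this index, the zigzag together with (i), (ii) and reversal gives $v$ only the labels $\lfloor n/2\rfloor$ and $n-1$, i.e.\ it handles only $f(u)\in\{0,\lceil n/2\rceil-1\}$. In the proof of Theorem~\ref{main1}, however, the attaching vertex carries other labels. For example, when $L_3^*$ is attached at $x$, $f_2(x)$ lies strictly between $0$ and $\lceil \ell_3/2\rceil-1$. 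So the full strength of (c) is genuinely used, and your sketches for (b) and (c) (interleaved zigzags, wrap-around in blocks of four) cannot replace the citation.

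Relatedly, your list of endvertex labels $0,\alpha,\alpha+1,m$ is right only if $\alpha$ means the index of the resulting labeling. For odd $n$ the complement lowers the index of the zigzag from $\alpha$ to $\alpha-1$. For instance, with $n=5$ and $\alpha=2$, no $\alpha$-labeling gives an endvertex the label $3$, which is one of the values excluded in (c).
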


 \begin{proof}[Proof of Theorem~\ref{thm:attching lemma}]
    Since $n\not\equiv 1 \pmod 4$ and $f(u)+\lfloor\frac{n}{2}\rfloor \le n-1 =|E(P_n)|$,  by Lemma~\ref{lem:alpha-labeling-Pn}(c), $P_n$ has 
 	an $\alpha$-labeling $g$ with  the index 
 	\begin{align*}
 	\alpha &=\lfloor\frac{n}{2} \rfloor -1,  \quad \text{and} \\ 
 	g(v) &=f(u)+\lfloor\frac{n}{2}\rfloor. 
 	\end{align*}
 	
 	Let $m=|E(G)|$ and  $G'=G(u)\oplus P_n(v)$.  Note that $E(G')=E(G) \cup E(P_n) \cup \{uv\}$. 
 	 We now define a labeling $h$ of $G'$ as follows: 
 \begin{align*}
 	h(w) &= f(w)+\lfloor \frac{n}{2} \rfloor, &&\text{for all } w\in V(G);\\
 	h(x) &= g(x), &&\text{for all } x \in V(P_{n})\text{ such that } 0\le g(x)\le \lfloor\frac{n}{2}\rfloor-1;\\
 h(x) &= g(x)+m+1, &&\text{for all } x \in V(P_{n})\text{ such that } \lfloor\frac{n}{2}\rfloor\le g(x)\le n-1.
 \end{align*}
 Figure~\ref{fig:attaching_lemma_example} illustrates the construction of $h$. 
 
 \begin{figure}[htpb] 
 	\centering 
 		\begin{tikzpicture}
 			\draw [black, thick] (-8,-1)--(-9.5,-2);
 			\draw [black, thick] (-8,-1)--(-8.75,-2)--(-8.75,-3);
 			\draw [black, thick] (-8,-1)--(-8,-2)--(-8,-3)--(-8,-4);
 			\node[circle, draw, fill=white, scale=.7] (1) at (-8,-1) {0};
 			\node[circle, draw, fill=white, scale=.7] (2) at (-9.5,-2) {6};
 			\node[circle, draw, fill=white, scale=.7] (3) at (-8.75,-2) {5};
 			\node[circle, draw, fill=white, scale=.7] (4) at (-8.75,-3) {1};
 			\node[circle, draw, fill=white, scale=.7] (5) at (-8,-2) {3};
 			\node[circle, draw, fill=white, scale=.7] (6) at (-8,-3) {4};
 			\node[circle, draw, fill=white, scale=.7] (7) at (-8,-4) {2};
 			\node [] at (-9,-5) {$G$ with $f(u)=3$};
 			\node [] at (-7.7,-1.6) {$u$};
 			\node [] at (-7.2,-1.6) {$v$};
 			
 			\draw[black,thick] (-7.05,-2)--(-6.35,-2)--(-5.65,-2)--(-4.95,-2)--(-4.95,-2.7)--(-4.95,-3.4)--(-4.95,-4.1);
 			\node[circle, draw, fill=white, scale=.7] (8) at (-7.05,-2) {6};
 			\node[circle, draw, fill=white, scale=.7] (9) at (-6.35,-2) {0};
 			\node[circle, draw, fill=white, scale=.7] (10) at (-5.65,-2) {5};
 			\node[circle, draw, fill=white, scale=.7] (11) at (-4.95,-2) {1};
 			\node[circle, draw, fill=white, scale=.7] (12) at (-4.95,-2.7) {4};
 			\node[circle, draw, fill=white, scale=.7] (13) at (-4.95,-3.4) {2};
 			\node[circle, draw, fill=white, scale=.7] (14) at (-4.95,-4.1) {3};
 			\node[] at(-3.85,-5) {$P_7$ with an $\alpha$-labeling};
 			\draw [black, thick] (0,-1)--(-1.5,-2);
 			\draw [black, thick] (0,-1)--(-.75,-2)--(-.75,-3);
 			\draw [black, thick] (0,-1)--(0,-2)--(0,-3)--(0,-4);
 			\draw[black,thick] (0.95,-2)--(1.65,-2)--(2.35,-2)--(3.05,-2)--(3.05,-2.7)--(3.05,-3.4)--(3.05,-4.1);
 			\draw[red,ultra thick] (0,-2)--(0.95,-2); 
 			
 			\node[circle, draw, fill=white, scale=.7] (15) at (0,-1) {3};
 			\node[circle, draw, fill=white, scale=.7] (16) at (-1.5,-2) {9};
 			\node[circle, draw, fill=white, scale=.7] (17) at (-.75,-2) {8};
 			\node[circle, draw, fill=white, scale=.7] (18) at (-.75,-3) {4};
 			\node[circle, draw, fill=white, scale=.7] (19) at (0,-2) {6};
 			\node[circle, draw, fill=white, scale=.7] (20) at (0,-3) {7};
 			\node[circle, draw, fill=white, scale=.7] (21) at (0,-4) {5};
 			\node[] at(0.3,-1.6) {$u$};
 			
 			\node[circle, draw, fill=white, scale=.65] (22) at (0.95,-2) {13};
 			\node[circle, draw, fill=white, scale=.7] (23) at (1.65,-2) {0};
 			\node[circle, draw, fill=white, scale=.65] (24) at (2.35,-2) {12};
 			\node[circle, draw, fill=white, scale=.7] (25) at (3.05,-2) {1};
 			\node[circle, draw, fill=white, scale=.65] (26) at (3.05,-2.7) {11};
 			\node[circle, draw, fill=white, scale=.7] (27) at (3.05,-3.4) {2};
 			\node[circle, draw, fill=white, scale=.65] (28) at (3.05,-4.1) {10};
 			\node[] at(0.8,-1.6) {$v$};
 			
 			\node [] at (1,-5) {Labeling $h$ of  $G(u)\oplus P_7(v)$};
 		\end{tikzpicture}
 	\caption{An illustration of the labeling $h$.}
 	\label{fig:attaching_lemma_example} 
 \end{figure}

 We first verify that $h$ is injective  from $V(G')$ to $[0,m+n]$. 
Since $f: V(G) \to [0,m]$ and $g: V(P_n) \to [0, n-1]$ are both injective, by the definition of  $h$, we have 
\begin{align*}
h(V(G)) &=\lfloor \frac{n}{2} \rfloor+[0,m]=\left[\lfloor \frac{n}{2} \rfloor,m+\lfloor \frac{n}{2}\rfloor\right], \\
h(V(P_n)) &= [0,\lfloor\frac{n}{2}\rfloor-1] \cup \left(m+1+[\lfloor\frac{n}{2}\rfloor, n-1]\right) \\
 &= [0,\lfloor\frac{n}{2}\rfloor-1] \cup \left[m+\lfloor\frac{n}{2}\rfloor+1, m+n \right]. 
\end{align*}
Thus $h$ is an injective  function from $V(G')$ to $[0,m+n]$. 

Next we verify that $h(E(G'))=[1,m+n]$.  As 
$h(x) = f(x)+\lfloor \frac{n}{2} \rfloor$ for all   $x\in V(G)$ 
and $f(E(G))=[1,m]$, we have  $h(E(G))=[1,m]$. 
As $g$ is an $\alpha$-labeling of  $P_n$, by the definition of $h$, 
we have $h(E(P_n)) =m+1+[1,n-1] =[m+2, m+n]$. 
Furthermore, we have $h(u) =f(u)+\lfloor \frac{n}{2} \rfloor$
and $h(v) =g(v)+m+1=f(u)+\lfloor\frac{n}{2}\rfloor+m+1$ since $g(v)\ge \lfloor\frac{n}{2}\rfloor$, and so $h(uv) =m+1$. 
Therefore, 
$$h(E(G')) =[1,m] \cup \{m+1\} \cup [m+2, m+n] =[1,m+n].$$

The above shows that $h$ is a graceful labeling of $G'$.  The second part of 
the statement is clear by the definition of $h$. 
 \end{proof}

\begin{proof}[Proof of Theorem~\ref{main1}]
Let $S$ be a spider with legs $L_1,L_2,\ldots, L_s$, and let    $\ell_i=|E(L_i)|$ such that   $\ell_{i+1}\geq 2\ell_i+2$ for any $i\in [2,s-1]$, and $\ell_{2}\geq 2\ell_1+2$ if $\ell_2 \not\equiv 1 \pmod 4$
and $\ell_{2}\geq 2\ell_1+4$  otherwise.  
We may assume that $s\ge 3$, as we are done by Lemma~\ref{lem:alpha-labeling-Pn}(a) otherwise.  
If there exists $i\in [2,s]$ such that $\ell_i \equiv 1 \pmod 4$, then we let  $k_1, \ldots, k_r$ be all the indices with 
$k_1<k_2< \ldots <k_r$ such that $\ell_{k_j} \equiv 1 \pmod 4$ for all $j\in [1,r]$. 

We let  $x$ be the center of $S$, and let $xy_i \in E(L_i)$ be the edge of $L_i$ incident with $x$. 
For each $j\in [1,r]$, we further let $y_{k_j}z_{k_j}$ be the edge of $L_{k_j}-x$ that is incident with $y_{k_j}$. 
We let $$S_1=S[V(L_1) \cup \{y_{k_j}: j\in [1,r]\}].$$
By Lemma~\ref{lem:alpha-labeling-Pn}(a), $S[V(L_1 )]$ has a graceful labeling  $f_1'$ such that $x$ is labeled by $0$. 
We now define a  labeling $f_1$ for $S_1$: 
\begin{numcases}{f_1(u)=}
f_1'(u) & \text{if $u\in V(L_1)$}, \nonumber  \\ 
\ell_1+j & \text{if $u=y_{k_j}, \quad  j\in [1,r]$}.  \nonumber 
\end{numcases}
Since $f_1'(x)=0$ and $f_1'$ is a graceful labeling of $S[V(L_1 )]$, it is clear that $f_1$ is a graceful labeling of $S_1$. 

Let $$L_i^*=L_i-V(S_1) \text{ \quad for each  $i\in [2,s]$. }$$ 
Our goal now is to extend $f_1$ to a graceful labeling of $S$ by iteratively adding   $L_i^*$  back to $S_1$ 
applying Theorem~\ref{thm:attching lemma}.  Let 
\begin{numcases}{S_i=}
S_{i-1}(x)\oplus L^*_{i}(y_i) & \text{if $i\notin \{k_1, \ldots , k_r\}$}, \nonumber  \\ 
S_{i-1}(y_i)\oplus L^*_i(z_i) & \text{otherwise}.  \nonumber 
\end{numcases}
Let $i\in [2,s]$ and suppose that we have found a graceful labeling $f_{i-1}$ of  $S_{i-1}$ such that 
\begin{equation}\label{eqn:label-upper-bound-S1}
f_{i-1}(w) \le f_1(w)+\sum^{i-1}_{j=2} \frac{\ell_j}{2} \quad \text{for any $w\in V(S_1)$}. 
\end{equation}
Note that~\eqref{eqn:label-upper-bound-S1} holds for $f_1$, and that 
 for any $j\in [2,i-1]$, we have 
\begin{align}\label{eqn:upper-bound-li}
	\ell_j \le \frac{\ell_{j+1}-2}{2} \le  \ldots \le \frac{1}{2^{i-j}}\ell_i-2\sum_{h=1}^{i-j} \frac{1}{2^h} =\frac{1}{2^{i-j}}\ell_i-2(1-\frac{1}{2^{i-j}}).
\end{align} 
 We consider two cases regarding whether $i\in \{k_1, \ldots, k_r\}$. 

{\bf \noindent Case 1: $i\not\in \{k_1, \ldots, k_r\}$.}

  In order to apply Theorem~\ref{thm:attching lemma}, it suffices to show that 
  $f_{i-1}(x)+ \lfloor\frac{\ell_i}{2} \rfloor +1 \le \ell_i$.  
  As $f_1(x)=0$, by~\eqref{eqn:label-upper-bound-S1}, we have 
  \begin{align*}
  & f_{i-1}(x)+ \lfloor\frac{\ell_i}{2} \rfloor +1  \\
 & \le   \sum^{i-1}_{j=2} \frac{\ell_j}{2}+ \lfloor\frac{\ell_i}{2} \rfloor +1 \\ 
 & \overset{\eqref{eqn:upper-bound-li}}{\le}  \sum^{i-1}_{j=2} \frac{1}{2} \left( \frac{1}{2^{i-j}}\ell_i-2(1-\frac{1}{2^{i-j}})\right)+\lfloor\frac{\ell_i}{2} \rfloor +1 \\
  &\le \ell_i  \sum_{j=1}^{i-1}  \frac{1}{2^j} - \sum^{i-1}_{j=2}(1-\frac{1}{2^{i-j}}) +1\\ 
  &= (1-\frac{1}{2^{i-1}}) \ell_i -(i-2)+ (1- \frac{1}{2^{i-2}}) +1. 
   \end{align*}
  When   $i\ge 4$, it is clear that $f_{i-1}(x)+ \lfloor\frac{\ell_i}{2} \rfloor +1   \le \ell_i$. 
  When $i\in [2,3]$, since $\ell_i \ge 2\ell_{1}+2  \ge 4$,  we have $\frac{1}{2^{i-1}} \ell_i \ge 1$.
Thus  $f_{i-1}(x)+ \lfloor\frac{\ell_i}{2} \rfloor +1  \le \ell_i$. 
  
We let $f_i$ be the graceful labeling of $S_i$ given by Theorem~\ref{thm:attching lemma}. 
By the second part of the theorem, we have 
$f_i(w) =f_{i-1}(w)+  \lfloor \frac{\ell_i}{2}\rfloor $, and so $f_i$ satisfies~\eqref{eqn:label-upper-bound-S1}. 

 {\bf \noindent Case 2: $i\in \{k_1, \ldots, k_r\}$.}
 
Suppose that  $i=k_p$ for some $p\in [1,r]$. 
In this case, we have $f_1(y_i) =\ell_1+p$ by the definition of $f_1$. 
Note that $\ell_1\le \frac{1}{2}(\ell_2-4)$ and for each $j\in [2,i-1]$, we still have~\eqref{eqn:upper-bound-li}. 

To apply Theorem~\ref{thm:attching lemma},  we show below that 
$f_{i-1}(y_i)+ \lfloor\frac{\ell_i-1}{2} \rfloor +1 \le \ell_i-1$.  

As $f_1(y_i) =\ell_1+p$, by~\eqref{eqn:label-upper-bound-S1}, we have 
\begin{align*}
	& f_{i-1}(y_i)+ \lfloor\frac{\ell_i-1}{2} \rfloor +1  \\
&	\le   \ell_1+p+\sum^{i-1}_{j=2} \frac{\ell_j}{2}+ \lfloor\frac{\ell_i-1}{2} \rfloor +1 \\ 
&	\overset{\eqref{eqn:upper-bound-li}}{\le}  \frac{1}{2}(\ell_2-4)+p+ \sum^{i-1}_{j=2} \frac{1}{2} \left( \frac{1}{2^{i-j}}\ell_i-2(1-\frac{1}{2^{i-j}})\right)+\lfloor\frac{\ell_i-1}{2} \rfloor + 1\\
	&	\overset{\eqref{eqn:upper-bound-li}}{\le}   \frac{1}{2^{i-1}} \ell_i-(1-\frac{1}{2^{i-2}})-2+p+\ell_i  \sum_{j=1}^{i-1}  \frac{1}{2^j} - \sum^{i-1}_{j=2}(1-\frac{1}{2^{i-j}}) +\frac{1}{2}\\ 
		&= \frac{1}{2^{i-1}} \ell_i-(1-\frac{1}{2^{i-2}})-2+p+(1-\frac{1}{2^{i-1}}) \ell_i -(i-2)+ (1- \frac{1}{2^{i-2}}) +\frac{1}{2}\\
& 	=	\ell_i -2+p-(i-2)+ \frac{1}{2} \\
	&= \ell_i-(i-p)+\frac{1}{2}. 
\end{align*}
Since $i=k_p$ and $p\le i-1$, we get   $f_{i-1}(y_i)+ \lfloor\frac{\ell_i-1}{2} \rfloor +1  \le \ell_i- \frac{1}{2}$, 
which implies 
 $f_{i-1}(y_i)+ \lfloor\frac{\ell_i-1}{2} \rfloor +1   \le \ell_i- 1$ as   $f_{i-1}(y_i)+ \lfloor\frac{\ell_i}{2} \rfloor +1 $ is 
 an integer. 
 
 We let $f_i$ be the graceful labeling of $S_i$ given by Theorem~\ref{thm:attching lemma}. 
 By the second part of the theorem, we again have 
 $f_i(w)  =f_{i-1}(w)+  \lfloor \frac{\ell_i-1}{2}\rfloor $, and so $f_i$ satisfies~\eqref{eqn:label-upper-bound-S1}. 
 
 Continue the process above, we can find a graceful labeling $f_s$ for the spider $S$. 
\end{proof}

\section{Proof of Theorems~\ref{thm:graceful-construction} and~\ref{main3}}

In this section, we prove Theorems~\ref{thm:graceful-construction} and~\ref{main3}. Since the proof of Theorem~\ref{main3} is shorter when Theorem~\ref{thm:graceful-construction} is assumed, we begin with the latter.

\begin{proof}[Proof of Theorem~\ref{main3}]
	 Let $S$ be a spider 
	 with  all but three legs of length at most $2$.  If $S$ has at most one leg of length greater than two,
	 then we are done by Theorem~\ref{thm:graceful-construction}. 
	 Thus, we assume that $S$ has at least two legs of length greater than two.  Let the center of $S$
	 be $x$ and $L_1$ and $L_2$ be two legs of $S$ that each has length at least three. 
	 Let $S^*=S-(V(L_1\cup L_2) \setminus \{x\})$. 
	 By Theorem~\ref{thm:graceful-construction}, $S^*$ has a graceful labeling $g$ such that $g(x)=0$. 
	 By Lemma~\ref{lem:alpha-labeling-Pn}(b), $L_1\cup L_2$ has an $\alpha$-labeling $h$ with   $h(x)=0$. 
	 Now, by Lemma~\ref{lem:vertex amalgamation}, $S$ has a graceful labeling.  
\end{proof}

\begin{proof}[Proof of Theorem~\ref{thm:graceful-construction}]

It suffices to construct the desired graceful labeling for spiders in which all but at most one leg has length~$2$. Indeed, suppose a spider $S'$ has a graceful labeling $f'$ with center $x$ labeled $0$. Let $S^*$ be the spider obtained from $S'$ by adding $t$ new vertices $y_1,\ldots,y_t$ and $t$ new edges $xy_1,\ldots,xy_t$. Extend $f'$ to a labeling $f^*$ by setting $f^*(u)=f'(u)$ for every $u\in V(S')$ and
\[
f^*(y_i)=|E(S')|+i \qquad \text{for } i\in [1,t].
\]
Then $f^*$ is clearly graceful, and $f^*(x)=0$. Therefore, for the remainder of the proof, let $S$ be a spider with center $x_0$ such that all but at most one leg have length~$2$. We construct a graceful labeling $f$ of $S$ satisfying $f(x_0)=0$. 
Since a spider  with exactly two legs is  a path, by Lemma~\ref{lem:alpha-labeling-Pn}(a), we suppose that $S$ has at least three legs.

We let $N$ be the  leg of $S$ that has length larger than 2 if it exists or let it be an arbitrary  leg of $S$, 
 and let $L_1, \ldots, L_s$ be all the other  legs of $S$, which are all  of length 2,  where $s\ge 2$ is an integer.   Let 
\begin{align*}
	L_i&=x_0 u_iv_i  \quad  i\in [1,s],  \\
	N&= x_0x_1\ldots x_\ell  \quad \text{where $\ell =|E(N)|$}, \quad \text{and}\\
	m&= 2s+\ell. 
\end{align*}
Note that $m=|E(S)|$.  See Figure~\ref{Fig2} for  an illustration of the vertex names of $S$ when  $s=2$ and $\ell=8$. 

\begin{figure}[htbp]
	\begin{center}
		\begin{tikzpicture}[scale=1,thick]
			
		{\tikzstyle{every node}=[draw ,circle,fill=white, minimum size=0.6cm,
		inner sep=0pt]
			
			\node[circle, draw, fill=white, scale=.7] (x1) at (-8,-1) {$x_1$};
			\node[circle, draw, fill=white, scale=.7] (x2) at (-7,-1) {$x_2$};
			\node[circle, draw, fill=white, scale=.7] (x3) at (-6,-1) {$x_3$};
			\node[circle, draw, fill=white, scale=.7] (x4) at (-5,-1) {$x_4$};
			\node[circle, draw, fill=white, scale=.7] (x5) at (-4,-1) {$x_5$};
			\node[circle, draw, fill=white, scale=.7] (x6) at (-3,-1) {$x_6$};
			\node[circle, draw, fill=white, scale=.7] (x7) at (-2,-1) {$x_7$};
			\node[circle, draw, fill=white, scale=.7] (x8) at (-1,-1) {$x_8$};
			
			\node[circle, draw, fill=white, scale=.7] (x0) at (-9.3,-1) {$x_0$};
			\node[circle, draw, fill=white, scale=.7] (u1) at (-10,-2) {$u_1$};	
			\node[circle, draw, fill=white, scale=.7] (u2) at (-8.6,-2) {$u_2$};
			\node[circle, draw, fill=white, scale=.7] (v1) at (-10,-3) {$v_{1}$};		
			\node[circle, draw, fill=white, scale=.7] (v2) at (-8.6,-3) {$v_{2}$};
		
		}

			\path[draw,thick]
		(x0) edge node[name=la,pos=0.7, above] {\color{blue} } (u1)
		(x0) edge node[name=la,pos=0.7, above] {\color{blue} } (u2)
		(u1) edge node[name=la,pos=0.6,above] {\color{blue}  } (v1)
		(u2) edge node[name=la,pos=0.6,above] {\color{blue}  } (v2)
		;
		\draw (x0) -- (x1) -- (x2) -- (x3) --(x4)--(x5)--(x6)--(x7)--(x8);
		\end{tikzpicture}
	\end{center}
	\caption{An illustration of  naming the vertices of $S$.}
	\label{Fig2}
\end{figure}

We define a labeling $f: V(S) \to [0,m]$ in two cases according to the parity of $\ell$. 

{\bf \noindent Suppose $\ell$ is odd. }  

For any $i\in [1,s]$, let 
\begin{align*}
	f(u_{i})&=m-(2i-1),\\
	f(v_{i})&=2i-1; 
\end{align*}
and for any $i\in [0,\ell]$, let 
 \begin{align*}
	f(x_i)&=\begin{cases}
			2\times \frac{i}{4}  =\frac{i}{2} & \text{if $i\equiv 0 \pmod 4$},\\
		  m- 2\times \frac{i-1}{4}=m-\frac{i-1}{2}&  \text{if $i\equiv 1 \pmod 4$},\\
		 2s-1 + 2\times  \frac{i+2}{4} =2s-1 + \frac{i+2}{2}& \text{if $i\equiv 2 \pmod 4$}, \\
		m-(2s-1) -2\times \frac{i+1}{4} =m-(2s-1) -\frac{i+1}{2}& \text{if $i\equiv 3 \pmod 4$}.
		\end{cases}
\end{align*}
See Figure~\ref{Fig3} for an illustration of $f$ when $s=2$ and $\ell=11$. 

\begin{figure}[htbp]
	\begin{center}
		\begin{tikzpicture}[scale=1,thick]
			
			{\tikzstyle{every node}=[draw ,circle,fill=white, minimum size=0.7cm,
				inner sep=0pt]
				
				\node[circle, draw, fill=white, scale=.7] (x1) at (-8,-1) {$x_1$};
				\node[circle, draw, fill=white, scale=.7] (x2) at (-7,-1) {$x_2$};
				\node[circle, draw, fill=white, scale=.7] (x3) at (-6,-1) {$x_3$};
				\node[circle, draw, fill=white, scale=.7] (x4) at (-5,-1) {$x_4$};
				\node[circle, draw, fill=white, scale=.7] (x5) at (-4,-1) {$x_5$};
				\node[circle, draw, fill=white, scale=.7] (x6) at (-3,-1) {$x_6$};
				\node[circle, draw, fill=white, scale=.7] (x7) at (-2,-1) {$x_7$};
				\node[circle, draw, fill=white, scale=.7] (x8) at (-1,-1) {$x_8$};
					\node[circle, draw, fill=white, scale=.7] (x9) at (0,-1) {$x_9$};
					\node[circle, draw, fill=white, scale=.7] (x10) at (1,-1) {$x_{10}$};
					\node[circle, draw, fill=white, scale=.7] (x11) at (2,-1) {$x_{11}$};
				
				\node[circle, draw, fill=white, scale=.7] (x0) at (-9.3,-1) {$x_0$};
				\node[circle, draw, fill=white, scale=.7] (u1) at (-10,-2) {$u_1$};	
				\node[circle, draw, fill=white, scale=.7] (u2) at (-8.6,-2) {$u_2$};
				\node[circle, draw, fill=white, scale=.7] (v1) at (-10,-3) {$v_{1}$};		
				\node[circle, draw, fill=white, scale=.7] (v2) at (-8.6,-3) {$v_{2}$};
				
			}

				\node at  (-9.3,-0.5) {$0$};
				\node at	($(x1)+(0,0.5)$)  {15}; 
				\node at	($(x2)+(0,0.5)$)  {5}; 
				\node at	($(x3)+(0,0.5)$)  {10}; 
				\node at	($(x4)+(0,0.5)$)  {2}; 
				\node at	($(x5)+(0,0.5)$)  {13}; 
				\node at	($(x6)+(0,0.5)$)  {7}; 
				\node at	($(x7)+(0,0.5)$)  {8}; 
				\node at	($(x8)+(0,0.5)$)  {4}; 
				\node at	($(x9)+(0,0.5)$)  {11}; 
				\node at	($(x10)+(0,0.5)$)  {9}; 
					\node at	($(x11)+(0,0.5)$)  {6}; 
					
						\node at	($(u1)-(0.5,0)$)  {14}; 
							\node at	($(v1)-(0.5,0)$)  {1}; 
							
								\node at	($(u2)+(0.5,0)$)  {12}; 
									\node at	($(v2)+(0.5,0)$)  {3}; 
			
			\path[draw,thick]
			(x0) edge node[name=la,pos=0.7, above] {\color{blue} } (u1)
			(x0) edge node[name=la,pos=0.7, above] {\color{blue} } (u2)
			(u1) edge node[name=la,pos=0.6,above] {\color{blue}  } (v1)
			(u2) edge node[name=la,pos=0.6,above] {\color{blue}  } (v2)
			;
			\draw (x0) -- (x1) -- (x2) -- (x3) --(x4)--(x5)--(x6)--(x7)--(x8)--(x9)--(x10)--(x11);
		\end{tikzpicture}
	\end{center}
	\caption{A graceful labeling of $S$ when $s=2$ and $\ell=11$.}
	\label{Fig3}
\end{figure}

{\bf \noindent Suppose $\ell$ is even.} 

For any $i\in [1,s]$, let 
\begin{align*}
	f(u_{i})&=m-2(i-1),\\
	f(v_{i})&=2i-1; 
\end{align*}
and for any $i\in [0,\ell]$, let 
\begin{align*}
	f(x_i)&=\begin{cases}
		2\times \frac{i}{4}=\frac{i}{2} & \text{if $i\equiv 0 \pmod 4$}, \\ 
	m-2s -2\times \frac{i-1}{4}  =	m-2s -\frac{i-1}{2} &  \text{if $i\equiv 1 \pmod 4$},\\
	2s-1 + 2\times \frac{i+2}{4} =	2s-1 + \frac{i+2}{2}& \text{if $i\equiv 2 \pmod 4$}, \\
	m-1-2\times \frac{i-3}{4} =	m-1-\frac{i-3}{2} & \text{if $i\equiv 3 \pmod 4$}.
	\end{cases}
\end{align*}
See Figure~\ref{Fig4} for an illustration of $f$ when $s=2$ and $\ell=10$.  

\begin{figure}[htbp]
	\begin{center}
		\begin{tikzpicture}[scale=1,thick]
			
			{\tikzstyle{every node}=[draw ,circle,fill=white, minimum size=0.7cm,
				inner sep=0pt]
				
				\node[circle, draw, fill=white, scale=.7] (x1) at (-8,-1) {$x_1$};
				\node[circle, draw, fill=white, scale=.7] (x2) at (-7,-1) {$x_2$};
				\node[circle, draw, fill=white, scale=.7] (x3) at (-6,-1) {$x_3$};
				\node[circle, draw, fill=white, scale=.7] (x4) at (-5,-1) {$x_4$};
				\node[circle, draw, fill=white, scale=.7] (x5) at (-4,-1) {$x_5$};
				\node[circle, draw, fill=white, scale=.7] (x6) at (-3,-1) {$x_6$};
				\node[circle, draw, fill=white, scale=.7] (x7) at (-2,-1) {$x_7$};
				\node[circle, draw, fill=white, scale=.7] (x8) at (-1,-1) {$x_8$};
				\node[circle, draw, fill=white, scale=.7] (x9) at (0,-1) {$x_9$};
				\node[circle, draw, fill=white, scale=.7] (x10) at (1,-1) {$x_{10}$};
				
				\node[circle, draw, fill=white, scale=.7] (x0) at (-9.3,-1) {$x_0$};
				\node[circle, draw, fill=white, scale=.7] (u1) at (-10,-2) {$u_1$};	
				\node[circle, draw, fill=white, scale=.7] (u2) at (-8.6,-2) {$u_2$};
				\node[circle, draw, fill=white, scale=.7] (v1) at (-10,-3) {$v_{1}$};		
				\node[circle, draw, fill=white, scale=.7] (v2) at (-8.6,-3) {$v_{2}$};
				
			}

			\node at  (-9.3,-0.5) {$0$};
			\node at	($(x1)+(0,0.5)$)  {10}; 
			\node at	($(x2)+(0,0.5)$)  {5}; 
			\node at	($(x3)+(0,0.5)$)  {13}; 
			\node at	($(x4)+(0,0.5)$)  {2}; 
			\node at	($(x5)+(0,0.5)$)  {8}; 
			\node at	($(x6)+(0,0.5)$)  {7}; 
			\node at	($(x7)+(0,0.5)$)  {11}; 
			\node at	($(x8)+(0,0.5)$)  {4}; 
			\node at	($(x9)+(0,0.5)$)  {6}; 
			\node at	($(x10)+(0,0.5)$)  {9};

			\node at	($(u1)-(0.5,0)$)  {14}; 
			\node at	($(v1)-(0.5,0)$)  {1}; 
			
			\node at	($(u2)+(0.5,0)$)  {12}; 
			\node at	($(v2)+(0.5,0)$)  {3}; 
			
			\path[draw,thick]
			(x0) edge node[name=la,pos=0.7, above] {\color{blue} } (u1)
			(x0) edge node[name=la,pos=0.7, above] {\color{blue} } (u2)
			(u1) edge node[name=la,pos=0.6,above] {\color{blue}  } (v1)
			(u2) edge node[name=la,pos=0.6,above] {\color{blue}  } (v2)
			;
			\draw (x0) -- (x1) -- (x2) -- (x3) --(x4)--(x5)--(x6)--(x7)--(x8)--(x9)--(x10);
		\end{tikzpicture}
	\end{center}
	\caption{A graceful labeling of $S$ when $s=2$ and $\ell=10$.}
	\label{Fig4}
\end{figure}

In the remainder of the proof, we verify that $f$ is a graceful labeling of $S$. 
Before proceed to the proof, we define a partition of  $V(S)$ and $E(S)$ as follows. 
Let 
\begin{align*}
U &= \{u_i: i\in [1,s]\}, \\ 
V &= \{v_i: i\in [1,s]\}, \quad \text{and}\\ 
W_j&=\{x_i: i\in [1,\ell],\, i \equiv j \pmod 4\}, \quad j\in [0,3]; 
\end{align*} 
and
\begin{align*}
E_1 &= \{x_0 u_i: i\in [1,s]\}, \\
E_2 &= \{u_iv_i: i\in [1,s]\}, \\
F_j &=\{x_ix_{i+1}: i\equiv j \pmod 4\}, \quad j\in [0,3]. 
\end{align*}

We separate the proof into two cases. In both cases, we show that $f$ is injective from $V(S)$ to $[0,m]$ and $f(E(S))= [1,m]$

{\bf \noindent  Case 1: $\ell$ is odd}.

\begin{CLA}\label{claim:case1-1-to-1}
 The function 	$f$ is injective from $V(S)$ to $[0,m]$. 
\end{CLA}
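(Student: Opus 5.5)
We need to show that $f$ is injective on $V(S)$ and maps into $[0,m]$ when $\ell$ is odd. The plan is to compute the image of each class in the partition $\{x_0\}\cup U\cup V\cup W_0\cup W_1\cup W_2\cup W_3$ as an explicit arithmetic progression, and then check that these progressions lie in $[0,m]$ and are pairwise disjoint. Within a class, $f$ is strictly monotone in the index, so it is injective there. Write $\ell=4q+r$ with $r\in\{1,3\}$.

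The images are as follows.
\begin{itemize}
\item $f(x_0)=0$.
\item $f(V)=\{1,3,\ldots,2s-1\}$, the odd numbers in $[1,2s-1]$.
\item $f(U)=\{m-1,m-3,\ldots,m-2s+1\}$. Since $m=2s+\ell$ is odd, these are the even numbers in $[m-2s+1,m-1]$.
\item $f(W_0)=\{2,4,\ldots\}$, the even numbers $i/2$ for $i\equiv 0\pmod 4$, $4\le i\le\ell$, i.e.\ up to $2\lfloor \ell/4\rfloor$.
\item $f(W_1)=\{m,m-2,\ldots\}$, the odd numbers from $m$ down to $m-2\lfloor (\ell-1)/4\rfloor$.
\item $f(W_2)=\{2s+1+2t: t\ge 1\}$, odd numbers starting at $2s+2$'s neighbour $2s+1+1=2s+2$'s predecessor, i.e.\ starting at $2s+1$ when $i=2$ gives $2s-1+2=2s+1$, with the largest value attained at the largest $i\equiv 2\pmod 4$, $i\le\ell$.
\item $f(W_3)$: even numbers descending from $m-(2s-1)-2=m-2s-1$.
\end{itemize}

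Parity separates most pairs. Because $m$ is odd, the even labels are $f(x_0)$, $f(W_0)$, $f(U)$ and $f(W_3)$, while the odd labels are $f(V)$, $f(W_1)$ and $f(W_2)$. So only classes of the same parity need to be compared.

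Among the even classes, the ordering is $0 < f(W_0) < f(W_3) < f(U)$. The first and last comparisons are immediate from the explicit ranges: $f(W_3)$ tops out at $m-2s-1$ and $f(U)$ starts at $m-2s+1$.

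Among the odd classes, the ordering is $f(V) < f(W_2) < f(W_1)$. Here $f(V)$ ends at $2s-1$, $f(W_2)$ starts at $2s+1$, and $f(W_1)$ is a descending progression from $m$.

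The real content, and the main obstacle, is the two middle comparisons: $\max f(W_0) < \min f(W_3)$ and $\max f(W_2) < \min f(W_1)$. These depend on where the progressions meet near the middle of $[0,m]$ and require a case split on $\ell\bmod 4$. I would handle them by substituting $m=2s+\ell$ and the largest admissible index in each class.

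For the even pair, $\max f(W_0)=2\lfloor \ell/4\rfloor$. The minimum of $f(W_3)$ is $m-2s+1-\frac{i+1}{2}$, where $i$ is the largest index $\le\ell$ with $i\equiv 3\pmod 4$; this equals $\ell+1-\frac{i+1}{2}$. With $i\le\ell$ this gives $\min f(W_3)\ge \frac{\ell+1}{2}$, which exceeds $\ell/2\ge 2\lfloor\ell/4\rfloor$, as needed.

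For the odd pair, $\max f(W_2)=2s-1+\frac{i+2}{2}$ with $i$ the largest index $\le\ell$ with $i\equiv 2\pmod 4$, so $\max f(W_2)\le 2s+\frac{\ell}{2}$. The minimum of $f(W_1)$ is $m-\frac{i'-1}{2}\ge 2s+\ell-\frac{\ell-1}{2}=2s+\frac{\ell+1}{2}$. This strictly exceeds the previous bound, so the odd classes are disjoint.

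Finally, every value lies in $[0,m]$ by these same range computations, since the smallest label is $0$ and the largest is $f(x_1)=m$. This completes the injectivity check. It is worth keeping the exact extreme values from these computations, since they are needed again later for the edge-label claim.
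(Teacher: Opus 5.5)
Your argument is correct and follows essentially the same route as the paper. Both split by parity (using that $m$ is odd), use monotonicity within each class, and rule out the two delicate overlaps, $W_0$ vs.\ $W_3$ and $W_2$ vs.\ $W_1$, using only $i,j\le\ell$: the paper solves $f(x_i)=f(x_j)$ to get the impossible $i+j=2\ell+1$, while you write the same inequality in interval form as $\max f(W_0)<\min f(W_3)$ and $\max f(W_2)<\min f(W_1)$, and you additionally make explicit the treatment of $x_0$ and the codomain $[0,m]$, which the paper leaves implicit. One cosmetic fix: $f(W_2)=\{2s+1+2t: t\ge 0,\ 4t+2\le \ell\}$, not $t\ge 1$, as your own follow-up (the value $2s+1$ at $i=2$) already indicates.
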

 \pf 
   Since $m$ is odd in this case, 
by the definition of $f$, we know that for any $v\in V(S)$, 
\begin{numcases}{f(v)\equiv }
	0 \pmod 2 & \text{if $v\in U \cup W_0\cup W_3$}, \nonumber  \\
	1 \pmod 2 & \text{if $v\in V \cup W_1\cup W_2$}.   \nonumber 
\end{numcases}
Thus $f(u) \ne f(v)$ if $u\in   U \cup W_0\cup W_3$ and $v\in V \cup W_1\cup W_2$. 
Therefore, to show that $f(u) \ne f(v)$ 
for distinct $u, v \in V(S)$,  we consider $u,v \in U \cup W_0\cup W_3$ or $u,v\in V \cup W_1\cup W_2$.

Consider first that  $u,v \in U \cup W_0\cup W_3$. 
Since $f(u_i) >f(u_j)$ when $i<j$ and $u_i,u_j\in U$,   $f(x_i)<f(x_j)<f(u_s)$ when $i<j$ and $x_i,x_j\in W_0$, 
and $f(u_s)>f(x_i)>f(x_j)$ when $i<j$ and $x_i,x_j\in W_3$,   it is left to show 
that $f(u) \ne f(v)$ when $u\in W_0$ and $v\in W_3$.  Let $u=x_i$ and $v=x_j$ 
for $i\equiv 0 \pmod 4$ and  $j\equiv 3 \pmod 4$ such that $i,j\in [0,\ell]$.  
If $f(u)=f(v)$, then we get $\frac{i}{2}=m-(2s-1) -\frac{j+1}{2} $.  Since $m=2s+\ell$, this implies that $i+j=2\ell +1$, a contradiction to $i,j\in [0,\ell]$.

Consider then  that  $u,v \in V \cup W_1\cup W_2$. 
Since $f(v_i) <f(v_j)$ when $i<j$ and $v_i,v_j\in V$,   $f(x_i)>f(x_j)>f(v_s)$ when $i<j$ and $x_i,x_j\in W_1$, 
and $f(v_s)<f(x_i)<f(x_j)$ when $i<j$ and $x_i,x_j\in W_2$,   it is left to show 
that $f(u) \ne f(v)$ when $u\in W_1$ and $v\in W_2$.  Let $u=x_i$ and $v=x_j$ 
for $i\equiv 1 \pmod 4$ and  $j\equiv 2 \pmod 4$ such that $i,j\in [1,\ell]$.  
If $f(u)=f(v)$, then we get $m-\frac{i-1}{2}=(2s-1) +\frac{j+2}{2} $.  This implies that $m=2s-1+\frac{i+j-1}{2}$, or equivalently $i+j=2(\ell+1)+1$, a contradiction to $i,j\in [1,\ell]$.  
 \qed 
 
 \begin{CLA}\label{claim:case1-edge-labels}
 	We have   $f(E(S))= [1,m]$. 
 \end{CLA}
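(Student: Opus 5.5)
The plan is to compute the label of every edge explicitly, class by class, using the partition $E(S)=E_1\cup E_2\cup F_0\cup F_1\cup F_2\cup F_3$. Since $|E(S)|=m$, it then suffices to show that these $m$ values are pairwise distinct and lie in $[1,m]$. I will write each edge label as $m-d$ and track the offset $d$; the goal is that the offsets fill $[0,m-1]$ exactly once.

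\textbf{Step 1: the labels.} Using $m=2s+\ell$ with $\ell$ odd, the definitions of $f$ give:
\begin{itemize}
\item $E_1$: $f(x_0u_i)=m-(2i-1)$, so $E_1$ contributes the odd offsets $1,3,\ldots,2s-1$.
\item $E_2$: $f(u_iv_i)=m-(4i-2)$, so $E_2$ contributes the offsets $2,6,\ldots,4s-2$, all $\equiv 2 \pmod 4$.
\item $F_0$: for $i\equiv 0\pmod 4$, $f(x_ix_{i+1})=m-i$, giving the offsets $0,4,8,\ldots$ up to $\ell-1$ or $\ell-3$.
\item $F_1$ and $F_3$: for $i\equiv 1$ or $i\equiv 3\pmod 4$, a direct computation gives $f(x_ix_{i+1})=m-2s-i$. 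Together they contribute the odd offsets $2s+1,2s+3,\ldots,2s+\ell-2$, that is, all odd offsets in $[2s+1,m-2]$. These labels are positive, since $i\le \ell-1<\ell$.
\item $F_2$: for $i\equiv 2\pmod 4$, $f(x_ix_{i+1})=|m-4s-i|=|\ell-2s-i|$.
\end{itemize}
The odd-offset classes $E_1$, $F_1$, $F_3$ therefore cover all odd offsets in $[1,m-2]$ exactly once. By parity these are exactly the even edge labels in $[2,m-1]$.

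\textbf{Step 2: the even offsets.} The remaining classes $F_0$, $E_2$ and $F_2$ must produce each odd label in $[1,m]$ exactly once. $F_0$ covers the labels $m-i$ for $i\equiv 0\pmod 4$, $i\le \ell-1$. $E_2$ covers the labels $m-d$ for $d\equiv 2\pmod 4$, $d\le 4s-2$.

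\textbf{Step 3: the main obstacle, $F_2$.} The expression $\ell-2s-i$ changes sign as $i$ runs over $2,6,\ldots$. I would split $F_2$ at $i\approx \ell-2s$:
\begin{itemize}
\item For $i<\ell-2s$ the label is $\ell-2s-i=m-(4s+i)$. These edges fill the offsets $\equiv 2\pmod 4$ starting at $4s+2$, continuing the run from $E_2$.
\item For $i>\ell-2s$ the label is $i-(\ell-2s)$. These are small odd labels, which must fill exactly the small labels $m-d$ with $d\equiv 0\pmod 4$ and $d>\ell-1$ that $F_0$ misses.
\end{itemize}
Checking that these two arithmetic progressions match up requires a case split on $\ell \bmod 4$ and $s \bmod 2$, and also the case $\ell<2s$, where the whole of $F_2$ is on the "small" side. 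I would carry out this matching by computing, in each residue case, the residues mod 4 of $m$ and of $\ell-2s$, and comparing endpoints.

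\textbf{Step 4: conclusion.} Finally, I would confirm that no label equals $0$; this holds since $\ell-2s-i$ is odd. All labels then lie in $[1,m]$ and are distinct, so a counting argument with $|E(S)|=m$ gives $f(E(S))=[1,m]$.
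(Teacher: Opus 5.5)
Your overall strategy (compute every edge label explicitly, then show the labels tile $[1,m]$) is a different route from the paper's pairwise-distinctness case analysis and could work. However, your Step 1 formula for $E_2$ is wrong, and Step 3 rests on it.

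In fact $f(u_iv_i)=|f(u_i)-f(v_i)|=|m-4i+2|$. The quantity $m-4i+2$ is negative for $i=s$ whenever $4s-2>m=2s+\ell$, i.e.\ whenever $\ell\le 2s-3$. In that regime $E_2$ does not contribute the offsets $2,6,\ldots,4s-2$, since offsets beyond $m-1$ make no sense. Your Step 3 claim that the ``small'' side of $F_2$ alone fills exactly the labels missed by $F_0$ is then false, because the wrapped-around $E_2$ edges also land there. Take $s=\ell=3$, so $m=9$:
\begin{itemize}
\item $f(u_3)=4$ and $f(v_3)=5$, so $f(u_3v_3)=1$, not $m-10$;
\item $F_0=\{x_0x_1\}$ gives only the label $9$;
\item $F_2=\{x_2x_3\}$ gives only the label $5$.
\end{itemize}
So of the two labels $5,1$ that $F_0$ misses, the label $1$ comes from $E_2$. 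Your planned case split (on $\ell \bmod 4$, $s \bmod 2$, and $\ell<2s$) never accounts for this wrap-around, so the endpoint matching would not close. The paper avoids the problem by keeping the absolute value $|m-4i+2|$ and ruling out both sign possibilities in each comparison involving $E_2$ (Cases 1o.1, 1o.4, 1o.5).

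The repair actually simplifies your argument: treat $E_2$ and $F_2$ as one progression.
\begin{itemize}
\item Set $d=4i-2$ for $u_iv_i$ and $d=4s+i$ for $x_ix_{i+1}\in F_2$. Then these edges have labels $|m-d|$, where $d$ runs exactly once over all $d\equiv 2\pmod 4$ in $[2,m+2s-1]$.
\item The values with $d<m$ give every label $\equiv m+2 \pmod 4$ in $[1,m-2]$.
\item The values with $d>m$ give $d-m$. Since $m$ is odd, $2-m\equiv m\pmod 4$, so these run over all labels $\equiv m\pmod 4$ in $[1,2s-1]$.
\item $F_0$ gives exactly the labels $\equiv m\pmod 4$ in $[2s+1,m]$.
\end{itemize}
Your treatment of $E_1\cup F_1\cup F_3$ is correct and yields all even labels in $[2,m-1]$. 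Combined with the above, this covers $[1,m]$ exactly once, with no case split at all.
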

 
 \pf 
Since $f$ is injective and so  $f(e)\in [1,m]$ for any $e\in E(S)$, it suffices to show that $f(e_1) \ne f(e_2)$ 
for distinct $e_1, e_2 \in E(S)$.

Since $m$ is odd, by the definition of $f$,
we know that for any $e\in E(S)$, 
\begin{numcases}{f(e)\equiv }
0 \pmod 2 & \text{if $e\in E_1 \cup F_1\cup F_3$}, \nonumber  \\
1 \pmod 2 & \text{if $e\in E_2 \cup F_0\cup F_2$}.   \nonumber 
\end{numcases}
Thus $f(e_1) \ne f(e_2)$ if $e_1\in   E_1 \cup F_1\cup F_3$ and $e_2\in E_2 \cup F_0\cup F_2$. 
Therefore, to show that $f(e_1) \ne f(e_2)$ 
for distinct $e_1, e_2 \in E(S)$,  we consider two collections of six subcases each:

\begin{minipage}{0.5\textwidth}
 \begin{enumerate} 
	\item  [Case 1e.1] $e_1, e_2\in E_1$. 
	\item  [Case 1e.2] $e_1, e_2\in F_1$.
	\item  [Case 1e.3] $e_1, e_2\in F_3$.
	\item  [Case 1e.4]  $e_1\in E_1$ and $e_2 \in F_1$. 
	\item [Case 1e.5]  $e_1\in E_1$ and $e_2\in F_3$. 
	\item [Case 1e.6] $e_1\in F_1$ and $e_2\in F_3$. 
\end{enumerate}
\end{minipage}
\begin{minipage}{0.5\textwidth}
\begin{enumerate} 
	\item  [Case 1o.1] $e_1, e_2\in E_2$. 
	\item  [Case 1o.2] $e_1, e_2\in F_0$.
	\item  [Case 1o.3] $e_1, e_2\in F_2$.
	\item  [Case 1o.4]  $e_1\in E_2$ and $e_2 \in F_0$. 
	\item [Case 1o.5]  $e_1\in E_2$ and $e_2\in F_2$. 
	\item [Case 1o.6] $e_1\in F_0$ and $e_2\in F_2$. 
\end{enumerate}
\end{minipage}
 
 We list the expression of $f(e)$ for $e\in E(S)$:
 \begin{align*}
	f(e)&=\begin{cases}
	m-(2i-1)-0=m-2i+1 & \text{if $e=x_0u_i \in E_1$}, \\ 
	|m-(2i-1)-(2i-1)| =	|m-4i+2| &  \text{if $e=u_iv_i\in E_2$},\\
	m-\frac{i+1-1}{2}-\frac{i}{2}=m-i& \text{if $e=x_ix_{i+1} \in F_0$}, \\
	m-\frac{i-1}{2}-(2s-1+\frac{i+1+2}{2})=m-2s-i& \text{if $e=x_ix_{i+1}\in F_1$}, \\
    |m-(2s-1)-\frac{i+1+1}{2}-(2s-1+\frac{i+2}{2})|=|m-4s-i|& \text{if $e=x_ix_{i+1}\in F_2$}, \\
    m-(2s-1)-\frac{i+1}{2}-\frac{i+1}{2}=m-2s-i& \text{if $e=x_ix_{i+1} \in F_3$}. \\
	\end{cases}
\end{align*}
 
{\bf \noindent Case 1e.1:} $e_1, e_2\in E_1$.  

Let $e_1=x_0u_i$ and $e_2=x_0u_j$ for some distinct $i,j\in [1,s]$. 
Then $$f(e_1) =m-2i+1 \ne m-2j+1 =f(e_2).$$

{\bf \noindent Case 1e.2:} $e_1, e_2\in F_1$.   

Let $e_1=x_ix_{i+1}$ and $e_2=x_jx_{j+1}$ for some distinct $i,j \equiv 1 \pmod 4$ and $i,j\in [1,\ell-1]$. 
Then 
  $$f(e_1)  = m-2s-i\ne m-2s-j =f(e_2).$$
 
 {\bf \noindent Case 1e.3:} $e_1, e_2\in F_3$.   
 
 Let $e_1=x_ix_{i+1}$ and $e_2=x_jx_{j+1}$ for some distinct $i,j \equiv 3 \pmod 4$ and $i,j\in [1,\ell-1]$. 
 Then 
 $$f(e_1) = m-2s-i \ne m-2s-j=f(e_2).$$
 
 {\bf \noindent Case 1e.4:} $e_1\in E_1$ and $e_2\in F_1$.   
 
 Let $e_1=x_0u_i$ and $e_2=x_jx_{j+1}$ for some $i\in [1,s]$ and   $j \equiv 1 \pmod 4$ with  $j\in [1,\ell-1]$.  
  Since $m=2s+\ell$, we have 
 $$f(e_1) =m-2i+1 \ge m-2s+1 \ne m-2s- j =f(e_2).$$
 
 {\bf \noindent Case 1e.5:} $e_1\in E_1$ and $e_2\in F_3$.   
 
 Let $e_1=x_0u_i$ and $e_2=x_jx_{j+1}$ for some $i\in [1,s]$ and   $j \equiv 3 \pmod 4$ with  $j\in [1,\ell-1]$.  
 Since $m=2s+\ell$, we have 
 $$f(e_1) =m-2i+1 \ge m-2s+1 \ne m-2s- j =f(e_2).$$
 
 {\bf \noindent Case 1e.6:} $e_1\in F_1$ and $e_2\in F_3$.   
 
 Let $e_1=x_ix_{i+1}$ and $e_2=x_jx_{j+1}$ for some $i \equiv 1 \pmod 4$,   $j \equiv 3 \pmod 4$ and $i,j\in [1,\ell-1]$.  
 Then 
 $$f(e_1) =m-2s-i \ne  m-2s-j  =f(e_2).$$
 
 {\bf \noindent Case 1o.1:} $e_1, e_2\in E_2$. 
 
 Let $e_1=u_iv_i$ and $e_2=u_jv_j$ for some $i,j\in [1,s]$. 
 Suppose, without loss of generality, that $i<j$. 
 Then $f(e_1)=|m-4i+2|$ and $f(e_2) = |m-4j+2|$. 
 Suppose to the contrary that $f(e_1)=f(e_2)$. 
 Since $i\ne j$ and $i<j$, we must have  $4j-2-m= m-4i+2$. 
 This implies that $2m =2(2i+2j-2)$ or $m=2i+2j-2$, a contradiction to the assumption that $m$ is odd.

 {\bf \noindent Case 1o.2:} $e_1, e_2\in F_0$.   
 
 Let $e_1=x_ix_{i+1}$ and $e_2=x_jx_{j+1}$ for some distinct  $i,j \equiv 0\pmod 4$ and $i,j\in [0,\ell-1]$. 
 Then 
 $$f(e_1) = m-i \ne m-j =f(e_2).$$

 {\bf \noindent Case 1o.3:} $e_1, e_2\in F_2$.   
 
Let $e_1=x_ix_{i+1}$ and $e_2=x_jx_{j+1}$ for some  $i,j \equiv 2\pmod 4$ and $i,j\in [1,\ell-1]$. 
Suppose, without loss of generality, that $i<j$. 
Then we have 
\begin{align*}
f(e_1) =|m-4s-i| \quad  \text{and}\quad & f(e_2) =|m-4s-j|. 
\end{align*}
 Since $i<j$, $f(e_1) =f(e_2)$ implies that $4s+j-m=m-4s-i$,
 and so $m=4s+\frac{i+j}{2}$. 
 Since $i,j \equiv 2\pmod 4$, $\frac{i+j}{2}$ is even and so $m$ is even.  
 However, this gives a contradiction to the assumption that $m$ is odd. 

 {\bf \noindent Case 1o.4:} $e_1\in E_2$ and  $e_2\in F_0$.   
 
 Let $e_1=u_iv_i$   and $e_2=x_jx_{j+1}$ for some $i\in [1,s]$ and   $j \equiv 0 \pmod 4$ with  $j\in [0,\ell-1]$.  
 Then $f(e_1)=|m-4i+2|$ and $f(e_2) =m-j $.  If $f(e_1) =f(e_2)$, 
 then we either have $j=4i-2$ or $m=2i-1+\frac{j}{2}$. 
 In the former case, we get a contradiction to  the fact that $j \equiv 0 \pmod 4$; 
 and in the latter case, we get a contradiction to $m=2s+\ell$. 
 
  {\bf \noindent Case 1o.5:} $e_1 \in E_2$ and  $e_2\in F_2$.   
  
   Let $e_1=u_iv_i$   and $e_2=x_jx_{j+1}$ for some $i\in [1,s]$ and   $j \equiv 2 \pmod 4$  with  $j\in [1,\ell-1]$.  
  Then $f(e_1)=|m-4i+2|$ and $f(e_2) =|m-4s-j|$.  If $f(e_1) =f(e_2)$, then we have either $m=2s+2i+\frac{j-2}{2}$ 
  or $4i=4s+j+2$. 
   In the former case, since $j \equiv 2 \pmod 4$ and so $\frac{j-2}{2}$ is even, we get a contradiction to  the fact that $m$ is odd; 
  and in the latter case, we get a contradiction to  the fact that $i\in [1,s]$.  
  
   {\bf \noindent Case 1o.6:} $e_1 \in F_0$ and  $e_2\in F_2$.   
   
   Let $e_1=x_ix_{i+1}$ and $e_2=x_jx_{j+1}$ for some  $i  \equiv 0\pmod 4$,  $j  \equiv 2\pmod 4$, and $i,j\in [0,\ell-1]$. 
   Then $f(e_1) = m-i$ and $f(e_2) =|m-4s-j|$. 
   If $f(e_1) =f(e_2)$, then we have either $m= 2s+\frac{i}{2} +\frac{j}{2}$ or  $i=4s+j$. 
   In the former case,  since $m=2s+\ell$, we get $i+j=2\ell$, a contradiction to $i,j\in [1,\ell-1]$. 
 In the latter case, we get a contradiction to  the fact that $i  \equiv 0\pmod 4$ and   $j  \equiv 2\pmod 4$. 
  \qed 
  
  Combining Claims~\ref{claim:case1-1-to-1} and~\ref{claim:case1-edge-labels}, we know that $f$ 
  is a graceful labeling of $S$ when $\ell$ is odd. 
  
  {\bf \noindent  Case 2: $\ell$ is even}.

  \begin{CLA} \label{claim:case2-1-to-1}
  	The function 	$f$ is injective from $V(S)$ to $[0,m]$. 
  \end{CLA}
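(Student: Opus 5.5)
We prove Claim~\ref{claim:case2-1-to-1} the same way as Claim~\ref{claim:case1-1-to-1}: first split the vertices by parity of their labels, then check that each parity class is injective. Here $\ell$ is even, so $m=2s+\ell$ is even. From the definition of $f$ in the even case:
\begin{itemize}
\item $f(u_i)=m-2(i-1)$ is even, and $f(v_i)=2i-1$ is odd.
\item On $W_0$, $f(x_i)=i/2$ with $i\equiv 0 \pmod 4$, which is even.
\item On $W_1$, $f(x_i)=m-2s-\frac{i-1}{2}$, which is even since $\frac{i-1}{2}$ is even.
\item On $W_2$, $f(x_i)=2s-1+\frac{i+2}{2}$, which is odd since $\frac{i+2}{2}$ is even.
\item On $W_3$, $f(x_i)=m-1-\frac{i-3}{2}$, which is odd.
\end{itemize}
So the even labels are exactly those on $\{x_0\}\cup U\cup W_0\cup W_1$, and the odd labels are those on $V\cup W_2\cup W_3$. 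It remains to check injectivity inside each of these two classes. Along the way we note that every label lies in $[0,m]$; this follows from the ranges computed below.

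For the even class, each family is strictly monotone in its index. Also, $f(U)=\{m,m-2,\ldots,m-2s+2\}$ lies in $[m-2s+2,m]$. The set $f(W_1)$ has maximum $m-2s$ (at $i=1$) and decreases, so it is disjoint from $f(U)$. The set $f(W_0\cup\{x_0\})$ lies in $[0,\ell/2]$, and $\ell/2\le m-2s$, so it is also disjoint from $f(U)$. The only collision to rule out is between $W_0$ and $W_1$. Suppose $i/2=m-2s-\frac{j-1}{2}$ with $i\equiv 0$ and $j\equiv 1 \pmod 4$. Then $i+j=2\ell+1$, which is impossible for $i,j\le \ell$.

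For the odd class, $f(V)=\{1,3,\ldots,2s-1\}$. Labels on $W_2$ start at $2s+1$ and increase, and labels on $W_3$ are at least $m-1-\frac{\ell-4}{2}>2s-1$. Hence $V$ is separated from both. The remaining collision is between $W_2$ and $W_3$. Suppose $2s-1+\frac{i+2}{2}=m-1-\frac{j-3}{2}$ with $i\equiv 2$ and $j\equiv 3\pmod 4$. Then $i+j=2\ell+1$, again impossible.

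The main care needed is the boundary bookkeeping: confirming that each family's labels stay within the stated ranges (for instance, that $m-2s-\frac{i-1}{2}\ge 0$ for $i\le \ell$), and checking the degenerate cases with small $\ell$ where some $W_j$ is empty. Neither step is hard.
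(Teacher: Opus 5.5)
Your proposal is correct and follows the paper's proof essentially step for step. Both split the labels by parity into $\{x_0\}\cup U\cup W_0\cup W_1$ versus $V\cup W_2\cup W_3$, use monotonicity within each family, and rule out the cross-family collisions $W_0$/$W_1$ and $W_2$/$W_3$ through the contradiction $i+j=2\ell+1$; your explicit bounds separating $f(U)$ and $f(V)$ from the path labels, and your handling of $x_0$ and of the range $[0,m]$, just spell out what the paper states briefly.
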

  \pf 
  Since $m$ is even in this case, by the definition of $f$, we know that for any $v\in V(S)$,
  \begin{numcases}{f(v)\equiv }
	0 \pmod 2 & \text{if $v\in U \cup W_0\cup W_1$}, \nonumber  \\
	1 \pmod 2 & \text{if $v\in V \cup W_2\cup W_3$}.   \nonumber 
\end{numcases}

  Thus $f(u) \ne f(v)$ if $u\in   U \cup W_0\cup W_1$ and $v\in V \cup W_2\cup W_3$. 
Therefore, to show that $f(u) \ne f(v)$ 
for distinct $u, v \in V(S)$,  we consider $u,v \in U \cup W_0\cup W_1$ or $u,v\in V \cup W_2\cup W_3$. 

Consider first that  $u,v \in U \cup W_0\cup W_1$. 
Since $f(u_i) >f(u_j)$ when $i<j$ and $u_i,u_j\in U$,   $f(x_i)<f(x_j)<f(u_s)$ when $i<j$ and $x_i,x_j\in W_0$, 
and $f(u_s)>f(x_i)>f(x_j)$ when $i<j$ and $x_i,x_j\in W_1$,  it is left to show 
that $f(u) \ne f(v)$ when $u\in W_0$ and $v\in W_1$.  Let $u=x_i$ and $v=x_j$ 
for $i\equiv 0 \pmod 4$ and  $j\equiv 1 \pmod 4$ such that $i,j\in [0,\ell]$.  
If $f(u)=f(v)$, then we get $\frac{i}{2}=m-2s-\frac{j-1}{2} $.  Since $m=2s+\ell$, this implies that $i+j=2\ell +1$, a contradiction to $i,j\in [0,\ell]$. 

Consider then  that  $u,v \in V \cup W_2\cup W_3$. 
Since $f(v_i) <f(v_j)$ when $i<j$ and $v_i,v_j\in V$,   $f(x_j)>f(x_i)>f(v_s)$ when $i<j$ and $x_i,x_j\in W_2$, 
and $f(v_s)<f(x_j)<f(x_i)$ when $i<j$ and $x_i,x_j\in W_3$,   it is left to show 
that $f(u) \ne f(v)$ when $u\in W_2$ and $v\in W_3$.  Let $u=x_i$ and $v=x_j$ 
for $i\equiv 2 \pmod 4$ and  $j\equiv 3 \pmod 4$ such that $i,j\in [1,\ell]$.  
If $f(u)=f(v)$, then we get $2s-1+\frac{i+2}{2}=m-1-\frac{j-3}{2} $.  This implies that $i+j=2\ell+1$, a contradiction to $i,j\in [1,\ell]$.  
  \qed

   \begin{CLA} \label{claim:case2-edge-labels}
  	We have $f(E(S))= [1,m]$. 
  \end{CLA}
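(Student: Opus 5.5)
The approach mirrors Claim~\ref{claim:case1-edge-labels}. Since $f$ is injective into $[0,m]$ by Claim~\ref{claim:case2-1-to-1}, every edge label lies in $[1,m]$, and $|E(S)|=m$. So it suffices to show that the edge labels are pairwise distinct. I would first list $f(e)$ for each of the six edge classes, using the even-case definition of $f$ and $m=2s+\ell$:
\begin{align*}
f(x_0u_i)&=m-2i+2, & f(u_iv_i)&=|m-4i+3|,\\
f(x_ix_{i+1})&=m-2s-i \ (i\equiv 0 \bmod 4), & f(x_ix_{i+1})&=|m-4s-i| \ (i\equiv 1 \bmod 4),\\
f(x_ix_{i+1})&=m-2s-i \ (i\equiv 2 \bmod 4), & f(x_ix_{i+1})&=m-i \ (i\equiv 3 \bmod 4).
\end{align*}
Since $m$ is even, the labels on $E_1\cup F_0\cup F_2$ are even and those on $E_2\cup F_1\cup F_3$ are odd. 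So, exactly as before, only collisions within each of these two groups need to be excluded.

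\textbf{The even group.} Here the argument can be done by direct identification rather than case-by-case. The labels on $E_1$ are $m,m-2,\ldots,m-2s+2$. The labels on $F_0\cup F_2$ are $m-2s-i$ for every even $i\in[0,\ell-1]$, which gives $m-2s,m-2s-2,\ldots,2$. These two lists are disjoint and together form all even numbers in $[2,m]$. In particular there are no collisions, which handles the analogues of Cases 1e.1--1e.6 at once.

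\textbf{The odd group.} This is the part I expect to be the main obstacle, because of the absolute values on $E_2$ and $F_1$. I would split into the six subcases $E_2/E_2$, $F_1/F_1$, $F_3/F_3$, $E_2/F_1$, $E_2/F_3$, $F_1/F_3$, as in Cases 1o.1--1o.6.

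\begin{enumerate}[(i)]
\item The $F_3/F_3$ case is immediate, since $m-i$ is injective in $i$.
\item For $E_2/E_2$ with $i<j$, equality of the absolute values forces $4j-3-m=m-4i+3$, that is $m=2i+2j-3$. This is odd, contradicting $m$ even.
\item For $F_1/F_1$ with $i<j$, equality forces $m=4s+\frac{i+j}{2}$. Since $i,j\equiv 1\pmod 4$, we have $\frac{i+j}{2}\equiv 1\pmod 2$, so $m$ would be odd, a contradiction.
\item For the mixed cases I would use residues mod $4$. Note that $m-4i+3\equiv m+3$, $m-4s-i\equiv m+3$, and $m-i\equiv m+1 \pmod 4$ for the relevant $i$; taking a negative flips a residue $r$ to $-r$.
  \begin{itemize}
  \item $E_2/F_1$: if both expressions have the same sign, equality gives $4i-3=4s+j$, so $i>s$, which is impossible. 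If the signs are opposite, equality gives $2m=4s+4i+j-3$, and I would derive the contradiction from $j\le \ell-1$ or from parity.
  \item $E_2/F_3$ and $F_1/F_3$: when the $E_2$ or $F_1$ label is taken without sign change, the residues mod $4$ differ, so there is no collision. Otherwise, equality gives an equation such as $m-j=4i-3-m$, i.e.\ $2m=4i-3+j$ with $4i\le 4s$. This would contradict $j\le\ell-1$ and $m=2s+\ell$ by size, since $4i-3+j<4s+2\ell$ whenever $i\le s$ and $j\le \ell-1$.
  \end{itemize}
\end{enumerate}

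Once these six subcases are checked, all $m$ edge labels are distinct elements of $[1,m]$, so $f(E(S))=[1,m]$.
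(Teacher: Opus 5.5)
Your proof is correct and follows the paper's argument. You use the same parity split and, for the odd labels, the same six subcases with the same contradictions; your mod-$4$ residue remark just packages the paper's sign-case arguments in Cases 2o.5 and 2o.6. For the even labels, you replace Cases 2e.1--2e.6 with the tidier observation that $E_1$ and $F_0\cup F_2$ together give exactly $2,4,\ldots,m$.

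One small point concerns the opposite-sign $E_2/F_1$ case. There the size bound $j\le\ell-1$ does not suffice in general, so you should commit to parity ($m=2s+2i+\frac{j-3}{2}$ would be odd), or to your own residue remark, since opposite signs give residues $m+3$ and $m+1$ modulo $4$.
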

  
  \pf 
  Since $f$ is injective and so  $f(e)\in [1,m]$ for any $e\in E(S)$, it suffices to show that $f(e_1) \ne f(e_2)$ 
for distinct $e_1, e_2 \in E(S)$. 

Since $m$ is even, by the definition of $f$,
we know that for any $e\in E(S)$, 
\begin{numcases}{f(e)\equiv }
0 \pmod 2 & \text{if $e\in E_1 \cup F_0\cup F_2$}, \nonumber  \\
1 \pmod 2 & \text{if $e\in E_2 \cup F_1\cup F_3$}.   \nonumber 
\end{numcases}
Thus $f(e_1) \ne f(e_2)$ if $e_1\in   F_1 \cup E_0\cup E_2$ and $e_2\in F_2 \cup E_1\cup E_3$. 
Therefore, to show that $f(e_1) \ne f(e_2)$ 
for distinct $e_1, e_2 \in E(S)$,  we consider two collections of six subcases each:

\begin{minipage}{0.5\textwidth}
 \begin{enumerate} 
	\item  [Case 2e.1] $e_1, e_2\in E_1$. 
	\item  [Case 2e.2] $e_1, e_2\in F_0$.
	\item  [Case 2e.3] $e_1, e_2\in F_2$.
	\item  [Case 2e.4]  $e_1\in E_1$ and $e_2 \in F_0$. 
	\item [Case 2e.5]  $e_1\in E_1$ and $e_2\in F_2$. 
	\item [Case 2e.6] $e_1\in F_0$ and $e_2\in F_2$. 
\end{enumerate}
\end{minipage}
\begin{minipage}{0.5\textwidth}
\begin{enumerate} 
	\item  [Case 2o.1] $e_1, e_2\in E_2$. 
	\item  [Case 2o.2] $e_1, e_2\in F_1$.
	\item  [Case 2o.3] $e_1, e_2\in F_3$.
	\item  [Case 2o.4]  $e_1\in E_2$ and $e_2 \in F_1$. 
	\item [Case 2o.5]  $e_1\in E_2$ and $e_2\in F_3$. 
	\item [Case 2o.6] $e_1\in F_1$ and $e_2\in F_3$. 
\end{enumerate}
\end{minipage}

  We list the expression of $f(e)$ for $e\in E(S)$:
 \begin{align*}
	f(e)&=\begin{cases}
		m-2(i-1)-0=m-2i+2 & \text{if $e=x_0u_i \in E_1$}, \\ 
	|m-2i+2-(2i-1)| =|m-4i+3| &  \text{if $e=u_iv_i \in E_2$},\\
	m-2s-\frac{i+1-1}{2}-\frac{i}{2}=m-2s-i& \text{if $e=x_ix_{i+1} \in F_0$}, \\
	|m-2s-\frac{i-1}{2}-(2s-1+\frac{i+1+2}{2})|=|m-4s-i|& \text{if $e=x_ix_{i+1} \in F_1$}, \\
    m-1-\frac{i+1-3}{2}-(2s-1+\frac{i+2}{2})=m-2s-i& \text{if $e=x_ix_{i+1} \in F_2$}, \\
    m-1-\frac{i-3}{2}-\frac{i+1}{2}=m-i& \text{if $e=x_ix_{i+1} \in F_3$}. \\
	\end{cases}
\end{align*}
{\bf \noindent Case 2e.1:} $e_1, e_2\in E_1$.  

Let $e_1=x_0u_i$ and $e_2=x_0u_j$ for some distinct $i,j\in [1,s]$. 
Then $$f(e_1) =m-2i+2 \ne m-2j+2 =f(e_2).$$

{\bf \noindent Case 2e.2:} $e_1, e_2\in F_0$.   

Let $e_1=x_ix_{i+1}$ and $e_2=x_jx_{j+1}$ for some distinct  $i,j \equiv 0 \pmod 4$ and $i,j\in [0,\ell]$. 
Then 
  \begin{align*}
f(e_1) =m-2s-i\ne m-2s-j=f(e_2)
\end{align*}
 
 {\bf \noindent Case 2e.3:} $e_1, e_2\in F_2$.   
 
 Let $e_1=x_ix_{i+1}$ and $e_2=x_jx_{j+1}$ for some distinct  $i,j \equiv 2 \pmod 4$ and $i,j\in [2,\ell]$. 
 Then 
 $$f(e_1)= m-2s-i \ne m-2s-j=f(e_2).$$
 
 {\bf \noindent Case 2e.4:} $e_1\in E_1$ and $e_2\in F_0$.   
 
 Let $e_1=x_0u_i$ and $e_2=x_jx_{j+1}$ for some $i\in [1,s]$ and   $j \equiv 0 \pmod 4$ with  $j\in [0,\ell]$.  
  Since $m=2s+\ell$, we have 
 $$f(e_1) =m-2i+2 \ge m-2s+2 \ne m-2s- j=f(e_2).$$
 
 {\bf \noindent Case 2e.5:} $e_1\in E_1$ and $e_2\in F_2$.   
 
 Let $e_1=x_0u_i$ and $e_2=x_jx_{j+1}$ for some $i\in [1,s]$ and   $j \equiv 2\pmod 4$ with  $j\in [2,\ell]$.  
 Since $m=2s+\ell$, we have 
 $$f(e_1) =m-2i+2 \ge m-2s+2 \ne m-2s- j =f(e_2).$$
 
 {\bf \noindent Case 2e.6:} $e_1\in F_0$ and $e_2\in F_2$.   
 
 Let $e_1=x_ix_{i+1}$ and $e_2=x_jx_{j+1}$ for some $i \equiv 0 \pmod 4$,   $j \equiv 2 \pmod 4$ and $i,j\in [0,\ell]$.  
Then 
 $$f(e_1) =m-2s-i \ne  m-2s-j  =f(e_2).$$
 
 {\bf \noindent Case 2o.1:} $e_1, e_2\in E_2$. 
 
 Let $e_1=u_iv_i$ and $e_2=u_jv_j$ for some $i,j\in [1,s]$. 
 Suppose, without loss of generality, that $i<j$. 
 Then $f(e_1)=|m-4i+3|$ and $f(e_2) = |m-4j+3|$. 
 Suppose to the contrary that $f(e_1)=f(e_2)$. 
 Since $i\ne j$ and $i<j$, we must have  $4j-3-m= m-4i+3$. 
 This implies that $2m =4i+4j-6$ or $m=2i+2j-3$, a contradiction to the assumption that $m$ is even.

 {\bf \noindent Case 2o.2:} $e_1, e_2\in F_1$.   
 
Let $e_1=x_ix_{i+1}$ and $e_2=x_jx_{j+1}$ for some  $i,j \equiv 1 \pmod 4$ and $i,j\in [1,\ell]$. 
Suppose, without loss of generality, that $i<j$. 
Since $m=2s+\ell$, we have 
  \begin{align*}
f(e_1) =|m-4s-i| \quad  \text{and}\quad & f(e_2) =|m-4s-j|. 
\end{align*}
 Since $i<j$, $f(e_1) =f(e_2)$ implies that $4s+j-m=m-4s-i$,
 and so $m=4s+\frac{i+j}{2}$. 
 Since $i,j \equiv 1\pmod 4$, $\frac{i+j}{2}$ is odd and so $m$ is odd.  
 However, this gives a contradiction to the assumption that $m$ is even. 
 
 {\bf \noindent Case 2o.3:} $e_1, e_2\in F_3$.   
 
Let $e_1=x_ix_{i+1}$ and $e_2=x_jx_{j+1}$ for some distinct  $i,j \equiv 3\pmod 4$ and $i,j\in [3,\ell]$. 
Then 
\begin{align*}
f(e_1) =m-i\ne m-j=f(e_2) 
\end{align*}

 {\bf \noindent Case 2o.4:} $e_1\in E_2$ and  $e_2\in F_1$.   
 
 Let $e_1=u_iv_i$   and $e_2=x_jx_{j+1}$ for some $i\in [1,s]$ and   $j \equiv 1 \pmod 4$ with  $j\in [1,\ell]$.  
 Then $f(e_1)=|m-4i+3|$ and $f(e_2) =|m-4s-j| $.  If $f(e_1) =f(e_2)$, 
 then we either have $j=4(i-s)-3$ or $m=2s+2i+\frac{j-3}{2}$. In the former case, since $i\le s$, we get  a contradiction to $j\in [1,\ell]$.
 In the latter case, since $j \equiv 1 \pmod 4$ and so $2s+2i+\frac{j-3}{2}$ is odd, we get a contradiction to  the fact that $m$ is even. 
 
  {\bf \noindent Case 2o.5:} $e_1 \in E_2$ and  $e_2\in F_3$.   
  
   Let $e_1=u_iv_i$   and $e_2=x_jx_{j+1}$ for some $i\in [1,s]$ and   $j \equiv 3 \pmod 4$  with  $j\in [3,\ell]$.  
  Then $f(e_1)=|m-4i+3|$ and $f(e_2) =m-j$.  If $f(e_1) =f(e_2)$, then we have either $j=4i-3$ 
  or $m=2i+\frac{j-3}{2}$. In the former case, we get a contradiction to  the fact that $j\equiv3\pmod 4$; 
  and in the latter case,  since $2i+\frac{j-3}{2} <2s+\ell$, we get a contradiction to the fact  $m=2s+\ell$. 

   {\bf \noindent Case 2o.6:} $e_1 \in F_1$ and  $e_2\in F_3$.   
   
   Let $e_1=x_ix_{i+1}$ and $e_2=x_jx_{j+1}$ for some  $i  \equiv 1\pmod 4$,  $j  \equiv 3\pmod 4$, and $i,j\in [1,\ell]$. 
   Then $f(e_1) =|m-4s-i|$ and $f(e_2) =m-j$. 
   If $f(e_1) =f(e_2)$, then we have either $j=4s+i$ or  $2m=4s+i+j$. 
   In the former case,  we get a contradiction to the fact that $i  \equiv 1\pmod 4$ and $j\equiv 3\pmod 4$. 
 In the latter case, since $m=2s+\ell$, we get $i+j=2\ell$, a contradiction to $i,j\in[1,\ell -1]$. 

Combining Claims~\ref{claim:case2-1-to-1} and~\ref{claim:case2-edge-labels}, we know that $f$ 
  is a graceful labeling of $S$ when $\ell$ is even. \qed

The proof of Theorem~\ref{thm:graceful-construction} is now completed. 
\end{proof}

\bibliographystyle{abbrv}
\bibliography{reference}

\end{document}